\newtheorem{lemma}{Lemma}
\newtheorem{theorem}{Theorem}
\renewenvironment{proof}{ 
\noindent{\bf Proof.} \rm}{\penalty-20\null\hfill $\square$}
\numberwithin{equation}{section}
\newcommand{\R}{{\mathbb R}}
\newcommand{\bfu}{\mathbf{u}}
\newcommand{\bfv}{\mathbf{v}}
\newcommand{\bfw}{\mathbf{w}}
\newcommand{\bfx}{\mathbf{x}}
\newcommand{\bfy}{\mathbf{y}}
\newcommand{\bfL}{\mathbf{L}}
\newcommand{\bfW}{\mathbf{W}}
\newcommand{\cE}{{\cal E}}
\newcommand{\cG}{{\cal G}}
\newcommand{\cP}{{\cal P}}
\newcommand{\cR}{{\cal R}}
\newcommand{\rmd}{\mathrm{d}}
\newcommand{\rme}{\mathrm{e}}
\renewcommand{\div}{\mathrm{div}\,}
\newcommand{\bfzero}{\mathbf{0}}
\newcommand{\esssup}{\mathrm{ess\, sup}}
\newcommand{\br}{\hbox to 0.7pt{}}
\newcommand{\bfxb}{\bfx'}
\newcommand{\bfyb}{\bfy'}
\newcommand{\tb}{t'}
\newcommand{\bfvb}{\bfv'}
\newcommand{\vb}{v'}
\newcommand{\xb}{x'}
\newcommand{\yb}{y'}
\newcommand{\nablab}{\nabla'}
\newcommand{\Deltab}{\Delta'}
\newcommand{\divb}{{\rm div'}\, }
\newcommand{\pb}{p'}
\newcommand{\partialb}{\partial'}
\newcommand{\Uar}{U_{a,\rho}}
\newcommand{\Var}{V_{\hspace{-0.2pt}a,\rho}}
\newcommand{\Qab}{Q'_{a}}
\newcommand{\Uab}{U'_{a}}
\newcommand{\Vab}{V'_{\hspace{-0.2pt}a}}
\newcommand{\Mb}[2]{A'_{{#1}\hspace{-0.2pt},
\hspace{0.5pt}{#2}}}
\newcommand{\Bb}[1]{B'_{{#1}}}
\newcommand{\M}[2]{A_{{#1}\hspace{-0.5pt},\hspace{0.5pt}{#2}}}
\newcommand{\Bbu}[1]{B'{\hspace{-3pt} \lower 3pt\hbox{\tiny
#1}}}
\newcommand{\Mbu}[2]{A'{\hspace{-1.5pt} \lower 3pt\hbox{\tiny
#1\hspace{-2.5pt},\hspace{1.5pt}#2}}}
\newcommand{\Mbuu}[2]{A'{\hspace{-1.5pt} \lower 3pt\hbox{\tiny
#1,\hspace{1.5pt}#2}}}
\newcounter{constants}
\newcommand{\cn}[2]{ \addtocounter{constants}{1}
\newcounter{c#1#2}
\setcounter{c#1#2}{\value{constants}} c_{\arabic{c#1#2}} }
\newcommand{\cc}[2]{c_{\arabic{c#1#2}}}
\definecolor{lightgrey}{rgb}{0.9,0.9,0.9}
\definecolor{modra}{rgb}{0.1,0.1,0.7}
\definecolor{darkgreen}{rgb}{0.1,0.7,0.1}
\definecolor{cervena}{rgb}{0.7,0.1,0.1}
\definecolor{lightgreen}{rgb}{0.7,1.0,0.6}
\definecolor{lightblue}{rgb}{0.65,0.89,1}
\definecolor{darkblue}{rgb}{0.1,0.5,0.7}
\definecolor{orange}{rgb}{1.0,0.8,0.5}
\definecolor{lightorange}{rgb}{1,0.9,0.7}
\definecolor{darkorange}{rgb}{0.7,0.5,0.1}
\definecolor{rose}{rgb}{1.0,0.8,0.8}
\begin{document}

\title{\LARGE \bf A Refinement of the Local Serrin--Type Regularity
Criterion for a Suitable Weak Solution to the Navier--Stokes
Equations}

\author{Ji\v{r}\'{\i} Neustupa}

\date{}

\maketitle

\begin{abstract}
We formulate a new criterion for regularity of a suitable weak
solution $\bfv$ to the Navier--Stokes equations at the space--time
point $(\bfx_0,t_0)$. The criterion imposes a Serrin--type
integrability condition on $\bfv$ only in a backward neighbourhood
of $(\bfx_0,t_0)$, intersected with the exterior of a certain
space--time paraboloid with vertex at point $(\bfx_0,t_0)$. We
make no special assumptions on the solution in the interior of the
paraboloid.
\end{abstract}

\noindent
{\it AMS math.~classification (2000):} \ 35Q30, 76D03, 76D05.

\noindent
{\it Keywords:} \ Navier--Stokes equations, suitable weak
solution, regularity.

\section{Introduction} \label{S1}

{\bf The Navier--Stokes system.} \ Let $\Omega$ be a domain in
$\R^3$ and $T>0$. Put $Q_T:=\Omega\times(0,T)$. We deal with the
Navier--Stokes system
\begin{align}
\partial_t\bfv+\bfv\cdot\nabla\bfv\ &=\ -\nabla p+\nu\Delta\bfv &&
\mbox{in}\ Q_T, \label{1.1} \\ \noalign{\vskip 2pt}
\div\bfv\ &=\ 0 \qquad && \mbox{in}\ Q_T \label{1.2}
\end{align}
for the unknown velocity $\bfv=(v_1,v_2,v_3)$ and pressure $p$.
Symbol $\nu$ denotes the coefficient of viscosity. It is a
positive constant.

\vspace{4pt} \noindent
{\bf Notation.} \ Vector functions and spaces of vector functions
are denoted by boldface letters. The norm of a scalar- or vector-
or tensor-valued function with components in $L^q(\Omega)$
(respectively $W^{k,q}(\Omega)$) is denoted by $\|\, .\, \|_{q;\,
\Omega}$ (respectively $\|\, .\, \|_{k,q;\, \Omega}$). Norms in
spaces of functions on other domains than $\Omega$ are denoted by
analogy.

\vspace{4pt} \noindent
{\bf Weak and suitable weak solution, regular and singular
points.} \ The definition of a weak solution to the system
(\ref{1.1}), (\ref{1.2}) is explained, together with basic
properties of weak solutions, e.g.~in the books by
O.~A.~Ladyzhenskaya \cite{La}, R.~Temam \cite{Te}, H.~Sohr
\cite{So} and in the survey paper \cite{Ga2} by G.~P.~Galdi. Here,
we only recall that weak solution $\bfv$ of (\ref{1.1}),
(\ref{1.2}) belongs to $L^{\infty}(0,T;\, \bfL^2(\Omega))\cap
L^2(0,T;\, \bfW^{1,2}(\Omega))$. While the existence of a weak
solution (satisfying various kinds of boundary conditions) is
known on an arbitrarily long time interval $(0,T)$, its regularity
is generally an open problem.

By the definition from paper \cite{CKN} by L.~Caffarelli, R.~Kohn
and L.~Nirenberg, the point $(\bfx_0,t_0)\in Q_T$ is said to be a
{\it regular point} of weak solution $\bfv$ if there exists a
neighborhood $U$ of $(\bfx_0,t_0)$ such that
$\bfv\in\bfL^{\infty}(U)$. Points in $Q_T$ that are not regular
are called {\it singular}. A weak solution $\bfv$ of system
(\ref{1.1}), (\ref{1.2}) is called a {\it suitable weak
solution\/} if an associated pressure $p$ belongs to
$L^{5/4}(Q_T)$ and the pair $(\bfv,p)$ satisfies the so called
{\it generalized energy inequality\/}
\begin{equation}
2\nu\int_0^T\int_{\Omega}|\nabla\bfv|^2\, \phi\; \rmd\bfx\,
\rmd\br t\ \leq\ \int_0^T\int_{\Omega}\bigl[\, |\bfv|^2\,
\bigl(\partial_t\phi+\nu\Delta\phi\bigr)+\bigl(|\bfv|^2+2p\bigr)\,
\bfv\cdot\nabla\phi\br\bigr]\; \rmd\bfx\, \rmd\br t \label{1.3*}
\end{equation}
for every non--negative function $\phi$ from $C^{\infty}_0 (Q_T)$.
It is also shown in \cite{CKN} that the set of singular points of
suitable weak solution $\bfv$ has the $1$--dimensional parabolic
measure (which dominates the 1--dimensional Hausdorff measure)
equal to zero. This result follows (by a standard covering
procedure) from the local regularity criterion (also proven in
\cite{CKN}), saying that there exists a universal constant
$\epsilon>0$ such that if
\begin{displaymath}
{\displaystyle\mathrel{\mathop{\lim\, \sup}_{\delta\to 0+}}}\
\frac{1}{\delta}\int_{t_0-7\delta^2/8}^{t_0+\delta^2/8}
\int_{B_{\delta}(\bfx_0)}|\nabla\bfv|^2\; \rmd\bfx\, \rmd\br t\
\leq\ \epsilon
\end{displaymath}
then $(\bfx_0,t_0)$ is a regular point of $\bfv$. Analogous
results and some generalizations can also be found in papers
\cite{LaSe}, \cite{Lin}, \cite{KS}, \cite{Va}, and others.

\vspace{4pt} \noindent
{\bf Some other local regularity criteria.} \ The next criteria
are often called $\epsilon$--criteria because they state that
there exists a universal constant $\epsilon>0$ (sufficiently
small) such that if a certain quantity is less than or equal to
$\epsilon$ then $(\bfx_0,t_0)$ is a regular point of solution
$\bfv$. Constant $\epsilon$ is generally different in different
criteria.

F.~Lin \cite{Lin} proved that the condition
\begin{displaymath}
\lim_{\delta\to 0+}\ \frac{1}{\delta^2}\int_{t_0-\delta^2}^{t_0}
\int_{B_{\delta}(\bfx_0)}\bigl(|\bfv|^3+|p|^{\frac{3}{2}}\bigr)\;
\rmd\bfx\, \rmd\br t\ \leq\ \epsilon
\end{displaymath}
guarantees that $\bfv$ is H\"older continuous in the set
$\overline{B_{\rho}(\bfx_0)}\times[t_0-\rho^2,t_0]$ (for some
$\rho>0$), which implies that $(\bfx_0,t_0)$ is a regular point of
solution $\bfv$. Lin's criterion has been several times improved
(see \cite{NeNe}, \cite{SeSv} and \cite{Wo2}). Wolf's criterion
(see \cite{Wo2}) says that if $3\leq r,s\leq\infty$ and
\begin{displaymath}
\delta^{s\br\left[1-\left(\frac{2}{r}+\frac{3}{s}\right)\right]}
\int_{t_0-\delta^2}^{t_0} \biggl(
\int_{B_{\delta}(\bfx_0)}|\bfv|^s\; \rmd\bfx\biggr)^{\!
\frac{r}{s}}\, \rmd\br t\ \leq\ \epsilon
\end{displaymath}
holds for at least one $\delta>0$ then $\bfv$ is H\"older
continuous in the set
$\overline{B_{\delta/2}(\bfx_0)}\times[t_0-\delta^2/4,t_0]$.
Particularly, if we choose $r=s=3$ then we observe that if the
inequality \begin{equation}
\frac{1}{\delta^2}\int_{t_0-\delta^2}^{t_0}
\int_{B_{\delta}(\bfx_0)}|\bfv|^3\; \rmd\bfx\, \rmd\br t\ \leq\
\epsilon \label{1.4*}
\end{equation}
holds for at least one $\delta>0$ then $(\bfx_0,t_0)$ is a regular
point of solution $\bfv$.

A series of other local regularity criteria can be found in
\cite{Ser}, \cite{Ne2}, \cite{FaKoSo3}, etc.

Let us finally recall that S.~Takahashi \cite{Tak} proved that if
the norm of a weak solution $\bfv$ in $L^r_{w}(t_0-\rho^2,t_0;\,
\bfL^s(B_{\rho}(\bfx_0))$ (where $L^r_{w}$ denotes the weak
$L^r$--space and $2/r+3/s\leq 1$, $3<s\leq\infty$) is less than or
equal to $\epsilon$ then $(\bfx_0,t_0)$ is a regular point of
$\bfv$.

Takahashi's criterion has been modified in paper \cite{Ne3}, where
$\bfv$ is supposed to be integrable with powers $r\in[3,\infty)$
(in time) and $s\in(3,\infty)$ (in space) not necessarily in the
whole backward parabolic neighbourhood $Q_{a,\rho}:=B_{\sqrt{a}
\br\rho}(\bfx_0)\times(t_0-\rho^2,t_0)$, but only in the
intersection of this neighbourhood with the exterior of the
space--time paraboloid
\begin{equation}
P_a: \quad \sqrt{a(t_0-t)}=|\bfx-\bfx_0| \label{1.8*}
\end{equation}
(with vertex at $(\bfx_0,t_0)$). Exponents $r$ and $s$ are
required to satisfy the condition $2/r+3/s<1$, and number $a$ is
supposed to satisfy the inequality $0<a<4\nu$ in \cite{Ne3}.
Moreover, it is also supposed in \cite{Ne3} that there exist real
numbers $R$ and $h$ such that $R>1$, $0<h<R-1$ and
\begin{equation}
\int_{t_0-\rho^2/R^2}^{t_0}\biggl(\int_{(R-h)\br\sqrt{a(t_0-t)}<
|\bfx-\bfx_0|<R\sqrt{a(t_0-t)}}\, |p(\bfx,t)|^{\beta}\;
\rmd\bfx\biggr)^{\! \alpha/\beta}\, \rmd\br t\ <\ \infty
\label{1.5*}
\end{equation}
for $\alpha\in[\frac{r}{r-1},\infty)$ and
$\beta\in(\frac{3}{2},\infty)$, satisfying the inequality
$2/\alpha+3/\beta<2$. Note that the domain of the integral in
(\ref{1.5*}) is the exterior of paraboloid $P_a$, intersected with
neighbourhood $Q_{a,\rho}$.

\vspace{4pt} \noindent
{\bf On the result of this paper.} \ In this paper, we improve the
regularity criterion from \cite{Ne3} especially so that we remove
the assumption on the pressure. Concretely, we show that condition
(\ref{1.5*}) can be omitted. Moreover, in comparison to
\cite{Ne3}, we assume that $2/r+3/s=1$ and we also use a weaker
restriction on parameter $a$ (see Theorem \ref{T1}). This is
enabled by finer estimates in Sections \ref{S3}--\ref{S5}, and by
a different treatment of the term containing the transformed
pressure $\pb$ in Section \ref{S3}, see Lemma \ref{L2}.

Our Theorem \ref{T1} (formulated below) imposes the Serrin--type
condition only on velocity $\bfv$ in an arbitrarily small region
$\Uar$ in $Q_T$, which is defined as follows:
\begin{displaymath}
\Uar\ :=\ \bigl\{\, (\bfx,t)\in Q_T;\ t_0-\rho^2<t<t_0\
\mbox{and}\ \sqrt{a(t_0-t)} <|\bfx-\bfx_0|<\sqrt{a}\br\rho\,
\bigr\}
\end{displaymath}
(for $a>0$ and $0<\rho<\sqrt{t_0}$). In contrast to a series of
other regularity criteria, we make no assumptions on $\bfv$ or $p$
in the interior of paraboloid $P_a$, concretely in set $\Var$
which is the interior of $P_a$, intersected with neighbourhood
$Q_{a,\rho}$.

A generalization of Theorem \ref{T1}, where parameter $a$ does not
appear, is presented in Section \ref{S6}.

\begin{center}
\hspace{65mm}
\setlength{\unitlength}{1.1mm}
\begin{picture}(67,37)
  \put(0,4){\vector(1,0){67}} \put(62,0.5){$\bfx$}
  \put(3,2){\vector(0,1){33}} \put(4.5,32){$t$}
  \dashline[+30]{2}(0,28)(8.5,28)
  \dashline[+30]{2}(47.5,28)(66,28)
  \put(49,30){$t=t_0$}
  \dashline[+30]{2}(0,14)(8.5,14)
  \dashline[+30]{2}(47.5,14)(66,14)
  \put(53,10){$t=t_0-\rho^2$}
  \dashline[+30]{1.6}(28,4)(28,15)
  \dashline[+30]{1.5}(28,21)(28,28)
  \put(26.5,0.5){$\bfx_0$}
  \dashline[+30]{2}(46,4)(46,12.5)
  \dashline[+30]{2}(10,4)(10,12.5)
  \put(40,0.5){$|\bfx-\bfx_0|$}
  \put(40,-4.5){$=\sqrt{a}\br\rho$}
  \color{lightgrey}
  \put(10,27.85){\line(1,0){16.0}} \put(10,27.70){\line(1,0){15.0}}
  \put(10,27.55){\line(1,0){14.3}} \put(10,27.40){\line(1,0){13.7}}
  \put(10,27.25){\line(1,0){13.2}} \put(10,27.10){\line(1,0){12.7}}
  \put(10,26.95){\line(1,0){12.3}} \put(10,26.80){\line(1,0){11.9}}
  \put(10,26.65){\line(1,0){11.6}} \put(10,26.50){\line(1,0){11.3}}
  \put(10,26.35){\line(1,0){11.0}} \put(10,26.20){\line(1,0){10.7}}
  \put(10,26.05){\line(1,0){10.4}} \put(10,25.90){\line(1,0){10.1}}
  \put(10,25.75){\line(1,0){9.8}} \put(10,25.60){\line(1,0){9.6}}
  \put(10,25.45){\line(1,0){9.3}} \put(10,25.30){\line(1,0){9.1}}
  \put(10,25.15){\line(1,0){8.9}} \put(10,25.00){\line(1,0){8.7}}
  \put(10,24.85){\line(1,0){8.5}} \put(10,24.70){\line(1,0){8.3}}
  \put(10,24.55){\line(1,0){8.1}} \put(10,24.40){\line(1,0){7.9}}
  \put(10,24.25){\line(1,0){7.7}} \put(10,24.10){\line(1,0){7.5}}
  \put(10,23.95){\line(1,0){7.3}} \put(10,23.80){\line(1,0){7.1}}
  \put(10,23.65){\line(1,0){7.0}} \put(10,23.50){\line(1,0){6.8}}
  \put(10,23.35){\line(1,0){6.7}} \put(10,23.20){\line(1,0){6.5}}
  \put(10,23.05){\line(1,0){6.4}} \put(10,22.95){\line(1,0){6.2}}
  \put(10,22.80){\line(1,0){6.05}} \put(10,22.65){\line(1,0){5.95}}
  \put(10,22.50){\line(1,0){5.8}} \put(10,22.35){\line(1,0){5.65}}
  \put(10,22.20){\line(1,0){5.55}} \put(10,22.05){\line(1,0){5.4}}
  \put(10,21.90){\line(1,0){5.3}} \put(10,21.75){\line(1,0){5.1}}
  \put(10,21.60){\line(1,0){5.0}} \put(10,21.45){\line(1,0){4.85}}
  \put(10,21.30){\line(1,0){4.7}} \put(10,21.15){\line(1,0){4.6}}
  \put(10,21.00){\line(1,0){4.5}} \put(10,20.85){\line(1,0){4.35}}
  \put(10,20.70){\line(1,0){4.25}} \put(10,20.55){\line(1,0){4.1}}
  \put(10,20.40){\line(1,0){3.95}} \put(10,20.25){\line(1,0){3.85}}
  \put(10,20.10){\line(1,0){3.75}} \put(10,19.95){\line(1,0){3.65}}
  \put(10,19.80){\line(1,0){3.55}} \put(10,19.65){\line(1,0){3.45}}
  \put(10,19.50){\line(1,0){3.35}} \put(10,19.35){\line(1,0){3.20}}
  \put(10,19.20){\line(1,0){3.1}} \put(10,19.05){\line(1,0){3.0}}
  \put(10,18.90){\line(1,0){2.92}} \put(10,18.75){\line(1,0){2.8}}
  \put(10,18.60){\line(1,0){2.7}} \put(10,18.45){\line(1,0){2.6}}
  \put(10,18.30){\line(1,0){2.5}} \put(10,18.15){\line(1,0){2.4}}
  \put(10,18.00){\line(1,0){2.3}} \put(10,17.85){\line(1,0){2.2}}
  \put(10,17.70){\line(1,0){2.1}} \put(10,17.55){\line(1,0){2.0}}
  \put(10,17.40){\line(1,0){1.9}} \put(10,17.25){\line(1,0){1.8}}
  \put(10,17.10){\line(1,0){1.7}}
  \put(10.1,20){\line(0,-1){5.8}} \put(10.25,20){\line(0,-1){5.5}}
  \put(10.4,20){\line(0,-1){5.25}} \put(10.55,20){\line(0,-1){5.0}}
  \put(10.7,20){\line(0,-1){4.75}} \put(10.85,20){\line(0,-1){4.5}}
  \put(11.0,20){\line(0,-1){4.22}} \put(11.15,20){\line(0,-1){3.95}}
  \put(11.3,20){\line(0,-1){3.7}} \put(11.45,20){\line(0,-1){3.40}}
  \put(11.6,20){\line(0,-1){3.2}}
  \put(46,27.85){\line(-1,0){16.0}} \put(46,27.70){\line(-1,0){15.0}}
  \put(46,27.55){\line(-1,0){14.3}} \put(46,27.40){\line(-1,0){13.7}}
  \put(46,27.25){\line(-1,0){13.2}} \put(46,27.10){\line(-1,0){12.7}}
  \put(46,26.95){\line(-1,0){12.3}} \put(46,26.80){\line(-1,0){11.9}}
  \put(46,26.65){\line(-1,0){11.6}} \put(46,26.50){\line(-1,0){11.3}}
  \put(46,26.35){\line(-1,0){11.0}} \put(46,26.20){\line(-1,0){10.7}}
  \put(46,26.05){\line(-1,0){10.4}} \put(46,25.90){\line(-1,0){10.1}}
  \put(46,25.75){\line(-1,0){9.8}} \put(46,25.60){\line(-1,0){9.6}}
  \put(46,25.45){\line(-1,0){9.3}} \put(46,25.30){\line(-1,0){9.1}}
  \put(46,25.15){\line(-1,0){8.9}} \put(46,25.00){\line(-1,0){8.7}}
  \put(46,24.85){\line(-1,0){8.5}} \put(46,24.70){\line(-1,0){8.3}}
  \put(46,24.55){\line(-1,0){8.1}} \put(46,24.40){\line(-1,0){7.9}}
  \put(46,24.25){\line(-1,0){7.7}} \put(46,24.10){\line(-1,0){7.5}}
  \put(46,23.95){\line(-1,0){7.3}} \put(46,23.80){\line(-1,0){7.1}}
  \put(46,23.65){\line(-1,0){7.0}} \put(46,23.50){\line(-1,0){6.8}}
  \put(46,23.35){\line(-1,0){6.7}} \put(46,23.20){\line(-1,0){6.5}}
  \put(46,23.05){\line(-1,0){6.4}} \put(46,22.95){\line(-1,0){6.2}}
  \put(46,22.80){\line(-1,0){6.05}} \put(46,22.65){\line(-1,0){5.95}}
  \put(46,22.50){\line(-1,0){5.8}} \put(46,22.35){\line(-1,0){5.65}}
  \put(46,22.20){\line(-1,0){5.55}} \put(46,22.05){\line(-1,0){5.4}}
  \put(46,21.90){\line(-1,0){5.25}} \put(46,21.75){\line(-1,0){5.1}}
  \put(46,21.60){\line(-1,0){4.95}} \put(46,21.45){\line(-1,0){4.8}}
  \put(46,21.30){\line(-1,0){4.65}} \put(46,21.15){\line(-1,0){4.55}}
  \put(46,21.00){\line(-1,0){4.45}} \put(46,20.85){\line(-1,0){4.3}}
  \put(46,20.70){\line(-1,0){4.15}} \put(46,20.55){\line(-1,0){4.05}}
  \put(46,20.40){\line(-1,0){3.9}} \put(46,20.25){\line(-1,0){3.8}}
  \put(46,20.10){\line(-1,0){3.7}} \put(46,19.95){\line(-1,0){3.6}}
  \put(46,19.80){\line(-1,0){3.5}} \put(46,19.65){\line(-1,0){3.4}}
  \put(46,19.50){\line(-1,0){3.3}} \put(46,19.35){\line(-1,0){3.15}}
  \put(46,19.20){\line(-1,0){3.05}} \put(46,19.05){\line(-1,0){2.95}}
  \put(46,18.90){\line(-1,0){2.87}} \put(46,18.75){\line(-1,0){2.75}}
  \put(46,18.60){\line(-1,0){2.65}} \put(46,18.45){\line(-1,0){2.55}}
  \put(46,18.30){\line(-1,0){2.45}} \put(46,18.15){\line(-1,0){2.35}}
  \put(46,18.00){\line(-1,0){2.25}} \put(46,17.85){\line(-1,0){2.15}}
  \put(46,17.70){\line(-1,0){2.05}} \put(46,17.55){\line(-1,0){1.95}}
  \put(46,17.40){\line(-1,0){1.85}} \put(46,17.25){\line(-1,0){1.75}}
  \put(46,17.10){\line(-1,0){1.65}}
  \put(45.95,20){\line(0,-1){5.8}} \put(45.8,20){\line(0,-1){5.5}}
  \put(45.65,20){\line(0,-1){5.2}} \put(45.5,20){\line(0,-1){4.95}}
  \put(45.35,20){\line(0,-1){4.7}} \put(45.2,20){\line(0,-1){4.45}}
  \put(45.05,20){\line(0,-1){4.22}} \put(44.9,20){\line(0,-1){3.95}}
  \put(44.75,20){\line(0,-1){3.65}} \put(44.6,20){\line(0,-1){3.4}}
  \put(44.45,20){\line(0,-1){3.1}}
  \color{black}
  \dottedline{0.8}(43,23)(49,23)
  \put(50,22){$\Uar$}
  \dottedline{0.8}(17,12)(14,19)
  \put(16.5,8.5){$P_a$}
  \qbezier(10,14)(9,12)(7.9,9) \qbezier(46,14)(47,12)(48.1,9)
  \thicklines
  \put(10,14){\line(0,1){14}} \put(46,14){\line(0,1){14}}
  \qbezier(10,14)(17,28)(28,28) \qbezier(46,14)(39,28)(28,28)
  \put(10,28){\line(1,0){36}}
  \put(10,14){\line(1,0){36}}
  \put(28,28){\circle*{1}} \put(23,30.5){$(\bfx_0,t_0)$}
  \put(25.2,17.3){$\Var$}
  \put(-17,3){Fig.~1:}
\end{picture}
\end{center}

\vspace{-37mm} \noindent
Sets $\Uar$ and $\Var$ are sketched \\
on Fig.~1.  They are separated \\ by paraboloid $P_a$.

\vspace{30mm}
We denote by $\lambda_S(B_1)$ be the least eigenvalue of the
Dirichlet--Stokes operator in the unit ball $B_1$ in $\R^3$. Note
that the question of how to calculate explicitly the eigenvalues
and eigenfunctions of the Stokes operator in the ball was asked by
O.~A.~Ladyzhenskaya in 2003. It can be deduced from \cite{Sa} that
$\lambda_S(B_1)\leq\mu_1^2(J_{3/2})$, where $\mu_1(J_{3/2})\,
\dot=\, 4.4934$ is the first positive root of the Bessel function
$J_{3/2}$. Using the variational representation of
$\lambda_S(B_1)$ (i.e.~that $\lambda_S(B_1)$ equals the infimum of
$\|\nabla\bfu\|_{2;\, B_1}^2/\|\bfu\|_{2;\, B_1}^2$ over all
non--zero divergence--free functions $\bfu\in\bfW^{1,2}_0(B_1)$)
and the analogous representation of $\lambda_L(B_1)$ (the
principal eigenvalue of the Dirichlet--Laplacian on the unit
ball), we obtain $\lambda_S(B_1)\geq\lambda_L(B_1)$. The latter
equals $\pi^2$, see e.g.~\cite{Sa}.

The main result of this paper says:

\begin{theorem}
\label{T1}
Let $\bfv$ be a suitable weak solution of system (\ref{1.1}),
(\ref{1.2}), $(\bfx_0,t_0)\in Q_T$ and $0<a<4\nu\lambda_S(B_1)$.
Let $\rho>0$ be so small that $Q_{a,\rho}\subset Q_T$. Suppose
that function $\bfv$ satisfies the integrability condition in set
$\Uar$\br:
\begin{equation}
\int_{t_0-\rho^2}^{t_0}\biggl(\int_{\sqrt{a(t_0-t)}<
|\bfx-\bfx_0|<\sqrt{a}\br\rho}|\bfv(\bfx,t)|^s\;
\rmd\bfx\biggr)^{\! \frac{r}{s}}\, \rmd\br t\ <\ \infty
\label{1.6*}
\end{equation}
for some $r$, $s$, satisfying the inequalities
\vspace{-4pt}
\begin{equation}
3\leq r<\infty, \qquad 3<s<\infty, \qquad
\frac{2}{r}+\frac{3}{s}=1. \label{1.7*}
\end{equation}
Then $(\bfx_0,t_0)$ is a regular point of solution $\bfv$.
\end{theorem}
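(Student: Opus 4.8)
After a translation we may take $(\bfx_0,t_0)=(\bfzero,0)$. The natural step is to pass to self-similar variables adapted to $P_a$:
\[
\bfy=\frac{\bfx}{\sqrt{-t}},\quad \tau=-\log(-t),\quad \bfu(\bfy,\tau)=\sqrt{-t}\,\bfv(\bfx,t),\quad q(\bfy,\tau)=(-t)\,p(\bfx,t).
\]
Then (\ref{1.1}),(\ref{1.2}) turn into the Leray--type system
\[
\partial_\tau\bfu+\tfrac12\bfu+\tfrac12(\bfy\cdot\nabla)\bfu+(\bfu\cdot\nabla)\bfu=-\nabla q+\nu\Delta\bfu,\qquad \div\bfu=0,
\]
the paraboloid $P_a$ is carried onto the sphere $|\bfy|=\sqrt a$, the set $\Var$ onto the half--cylinder $B_{\sqrt a}\times(\tau_\ast,\infty)$ with $\tau_\ast=-\log\rho^2$, and $\Uar$ onto $\{\tau>\tau_\ast,\ \sqrt a<|\bfy|<\sqrt a\,\rme^{(\tau-\tau_\ast)/2}\}$. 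Because $\tfrac2r+\tfrac3s=1$ in (\ref{1.7*}), the power of $-t$ created by the change of variables in (\ref{1.6*}) is exactly $0$; hence (\ref{1.6*}) is equivalent to $\int_{\tau_\ast}^{\infty}\bigl(\int_{\sqrt a<|\bfy|<\sqrt a\rme^{(\tau-\tau_\ast)/2}}|\bfu|^s\,\rmd\bfy\bigr)^{r/s}\rmd\tau<\infty$, so $\bfu\in L^r_\tau L^s_\bfy$ outside the cylinder $B_{\sqrt a}$; in particular $\|\bfu(\cdot,\tau)\|_{L^s}\in L^r$ in $\tau$ on each fixed spherical shell $\{\sqrt a<R_1<|\bfy|<R_2\}$ for $\tau$ past a fixed $\tau_1$, where $\bfu$ is also smooth in $\bfy$ by the Serrin theory. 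The task thus becomes: show that the rescaled $\bfu$ becomes small near $\bfy=\bfzero$ as $\tau\to\infty$.

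\textbf{The local energy inequality and the role of $a<4\nu\lambda_S(B_1)$.} I would transform the generalized energy inequality (\ref{1.3*}) into the variables $(\bfy,\tau)$ and test with the weight corresponding to $\eta(\tau)\zeta(\bfy)^2$, where $\zeta\in C^\infty_0(\R^3)$ equals $1$ on $B_{\sqrt a(1+\theta_1)}$, is supported in $B_{\sqrt a(1+\theta_2)}$ (with $0<\theta_1<\theta_2$ small, so $\supp\nabla\zeta$ is a thin shell lying strictly in $\{|\bfy|>\sqrt a\}$), and $\eta\nearrow\mathbf 1_{(\tau_1,\tau_2)}$. For a.e.\ $\tau_1$ and all $\tau_2>\tau_1$ this gives
\[
\tfrac12\!\int|\bfu(\tau_2)|^2\zeta^2+\int_{\tau_1}^{\tau_2}\!\Bigl(\nu\!\int|\nabla\bfu|^2\zeta^2-\tfrac14\!\int|\bfu|^2\zeta^2\Bigr)\rmd\tau\ \le\ \tfrac12\!\int|\bfu(\tau_1)|^2\zeta^2+\int_{\tau_1}^{\tau_2}\! g(\tau)\,\rmd\tau,
\]
where $g(\tau)$ collects only terms supported in $\supp\nabla\zeta$ (namely $\tfrac\nu2\int|\bfu|^2\Delta\zeta^2$, $\tfrac14\int|\bfu|^2\,\bfy\!\cdot\!\nabla\zeta^2$, $\tfrac12\int|\bfu|^2\bfu\!\cdot\!\nabla\zeta^2$ and the pressure term $\int q\,\bfu\!\cdot\!\nabla\zeta^2$), and the coefficient $-\tfrac14=\tfrac12-\tfrac34$ comes from the lower--order part $\tfrac12\bfu+\tfrac12(\bfy\cdot\nabla)\bfu$ of the drift. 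The term $-\tfrac14\int|\bfu|^2\zeta^2$ is unfavourable; it is absorbed by the dissipation through the variational characterisation of $\lambda_S(B_1)$: correcting $\bfu\zeta$ by a Bogovski\u\i--type field supported in $\supp\nabla\zeta$ makes it divergence--free and vanishing on $\partial B_{\sqrt a(1+\theta_2)}$, whence $\nu\int|\nabla\bfu|^2\zeta^2\ge\bigl(\tfrac{\nu\lambda_S(B_1)}{a(1+\theta_2)^2}-\kappa\bigr)\int|\bfu|^2\zeta^2-C\,\Theta(\tau)$ with $\kappa$ arbitrarily small and $\Theta$ again a shell term. Since $a<4\nu\lambda_S(B_1)$, one may pick $\theta_2,\kappa$ so that $\gamma:=\tfrac{\nu\lambda_S(B_1)}{a(1+\theta_2)^2}-\kappa-\tfrac14>0$. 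The shell terms in $g+C\Theta$ are controlled by $\|\bfu\|_{L^s(\text{shell})}^3+\|\bfu\|_{L^s(\text{shell})}^2$ plus the pressure contribution; the latter — and this is where the extra hypothesis (\ref{1.5*}) of \cite{Ne3} is eliminated — is handled by Lemma~\ref{L2}, by a local splitting $q=q^{(1)}+q^{(2)}$ near the shell with $q^{(1)}$ the Newton potential of a localisation of $\partial_i\partial_j(u_iu_j)$ (estimated via Calder\'on--Zygmund by $\|\bfu\|_{L^s}^2$ on a neighbourhood) and $q^{(2)}$ harmonic on $\supp\nabla\zeta$, whose interior bound reduces to $\|q\|_{L^1}$ on a nearby shell, hence to the $L^{5/4}$--bound of $p$ available for a suitable weak solution. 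Altogether $g+C\Theta=G$ with $\|G\|_{L^p(\tau_1,\infty)}<\infty$ for some $p\ge1$.

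\textbf{Gr\"onwall, decay, and return to $(\bfx_0,t_0)$.} Writing $E(\tau):=\tfrac12\int|\bfu(\tau)|^2\zeta^2$, the inequality above reads $\tfrac{d}{d\tau}E+2\gamma E\le G$ weakly on $(\tau_1,\infty)$; since $\gamma>0$ and $G\in L^p(\tau_1,\infty)$, $E(\tau)\le E(\tau_1)\rme^{-2\gamma(\tau-\tau_1)}+\int_{\tau_1}^{\tau}\rme^{-2\gamma(\tau-\sigma)}G(\sigma)\,\rmd\sigma\longrightarrow0$ as $\tau\to\infty$. Integrating the energy inequality over $(\tau,\tau+1)$ and using $E\to0$ yields in addition $\int_\tau^{\tau+1}\!\int_{B_{\sqrt a}}|\nabla\bfu|^2\to0$, and the interpolation $\|\bfu\|_{L^3(B_{\sqrt a})}\lesssim\|\bfu\|_{L^2(B_{\sqrt a})}^{1/2}\|\bfu\|_{W^{1,2}(B_{\sqrt a})}^{1/2}$ then gives $\omega(\tau_0):=\sup_{s\ge\tau_0}\int_s^{s+1}\!\int_{B_{\sqrt a}}|\bfu|^3\,\rmd\bfy\,\rmd\tau\to0$ as $\tau_0\to\infty$. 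Finally, for small $\delta$ split $B_\delta(\bfx_0)\times(t_0-\delta^2,t_0)=\mathcal P\cup\mathcal E$, with $\mathcal P$ inside $P_a$ and $\mathcal E\subset\Uar$. In the self-similar variables $\mathcal P$ sits in $B_{\sqrt a}\times(2\log(1/\delta),\infty)$, and since $\delta^{-2}=\rme^{2\log(1/\delta)}$ the change of variables gives $\delta^{-2}\int_{\mathcal P}|\bfv|^3\le C\,\omega(2\log(1/\delta))\to0$; while H\"older's inequality together with (\ref{1.7*}) gives $\delta^{-2}\int_{\mathcal E}|\bfv|^3\le C\,\|\bfv\|_{L^r_tL^s_\bfx(\Uar\cap(B_\delta(\bfx_0)\times(t_0-\delta^2,t_0)))}^3$ (the exponent of $\delta$ being precisely $0$), which $\to0$ by (\ref{1.6*}) and absolute continuity of the integral on the shrinking sets. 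Hence for some small $\delta$ the left-hand side of (\ref{1.4*}) is $\le\epsilon$, so $(\bfx_0,t_0)$ is a regular point of $\bfv$.

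\textbf{Main obstacle.} The delicate step — and the exact place where the hypothesis $a<4\nu\lambda_S(B_1)$ is consumed — is the \emph{loss-free} passage from $\nu\int|\nabla\bfu|^2\zeta^2$ to $\tfrac{\nu\lambda_S(B_1)}{a}\int|\bfu|^2\zeta^2$: $\bfu$ is not divergence-free after multiplication by $\zeta$ and need not vanish on $\partial B_{\sqrt a}$, so the cut-off and the Bogovski\u\i\ correction must be arranged so that every error stays in the thin shell where (\ref{1.6*}) applies, while nothing is lost in the leading constant; simultaneously the pressure term over that shell must be estimated with \emph{no} assumption on $p$ inside $P_a$. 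These are exactly the ``finer estimates'' of Sections~\ref{S3}--\ref{S5} and the new treatment of the transformed pressure in Lemma~\ref{L2} announced in the introduction.
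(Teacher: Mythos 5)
Your overall skeleton --- splitting the Wolf quantity (\ref{1.4*}) into the parts inside and outside $P_a$, passing to self--similar variables, running the local energy inequality with a cut--off supported just outside the paraboloid, and recovering the loss of the coefficient $-\tfrac14$ (the paper's $\tfrac{a}{2}$ after its choice of exponential weight) from the dissipation via a Bogovskij--corrected Stokes--Poincar\'e inequality, which is exactly where $a<4\nu\lambda_S(B_1)$ is consumed --- coincides with the paper's strategy in Sections \ref{S2} and \ref{S5}. But your treatment of the pressure on the cut--off shell has a genuine gap, and it is precisely the point that Lemma \ref{L2} is designed to fix. You propose a \emph{local} splitting $q=q^{(1)}+q^{(2)}$ near the shell, with $q^{(2)}$ harmonic there, bounded by $\|q\|_{L^1}$ on a nearby shell and hence ``by the $L^{5/4}$--bound of $p$''. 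This fails for two reasons. First, that nearby shell is of the form $\{c_1\sqrt{t_0-t}<|\bfx-\bfx_0|<c_2\sqrt{t_0-t}\}$ and shrinks as $t\to t_0-$; undoing the rescaling, $\|q(\cdot,\tau)\|_{L^1(\mathrm{shell})}=(t_0-t)^{-1/2}\int_{\mathrm{shell}(t)}|p|\,\rmd\bfx$, and the corresponding time integral carries a weight diverging like a negative power of $t_0-t$, so $p\in L^{5/4}(Q_T)$ gives no usable $L^p_\tau$ bound on this quantity, let alone one tending to zero. Second, and more fundamentally, the harmonic part of the pressure on the shell is fed by $\partial_i\partial_j(v_iv_j)$ from \emph{inside} the paraboloid, i.e.\ by $\int_{B'_1}|\bfvb|^2$ --- exactly the energy $E(\tau)$ you are trying to control. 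Your claimed inequality $\frac{\rmd}{\rmd\tau}E+2\gamma E\le G$ with $G\in L^p(\tau_1,\infty)$ \emph{independent of the solution inside the ball} is therefore not available, and the clean Gr\"onwall decay does not follow.

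The paper resolves this with a \emph{global} potential--theoretic decomposition: in Lemma \ref{L2} the cut--off $\eta$ in the representation of $\eta\br\pb$ extends all the way out to the fixed radius $|\bfx-\bfx_0|\sim\sqrt{a}\br\rho$, so the raw pressure appears only multiplied by derivatives of $\eta$, i.e.\ on a \emph{fixed} spatial shell where the Caffarelli--Kohn--Nirenberg partial regularity theorem permits choosing $\rho,\gamma$ with $\bfv,p$ bounded; the remaining pieces are a Calder\'on--Zygmund term controlled by the Serrin norm in $\Uab$ and the explicit feedback term $\cc31\bigl(\int_{B'_1}|\bfvb|^2\,\rmd\bfxb\bigr)^{s/2}$. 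That feedback is then closed not by Gr\"onwall but by absorption: its coefficient in the estimate of $K^{I\hspace{-0.3pt}I}(\delta)$ is $\cc21(\delta)\to0$ by Lemma \ref{L4}, so the self--referential term is swallowed, and the surviving initial--datum term $\|\varphi\bfvb(\,.\,,\tb_{\delta})\|_{2;\,B'_4}^2$ is handled by the dichotomy of Lemma \ref{L1} (either the weighted time integral already vanishes, or the datum is dominated by it; only in the second case is the eigenvalue condition needed). Unless you replace your local pressure splitting by such a global one and add the corresponding absorption and dichotomy steps, the decay of $E(\tau)$ --- and with it your final verification of (\ref{1.4*}) --- is not established.
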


Theorem \ref{T1} shows that if a singularity eventually appears in
a suitable weak solution of the Navier--Stokes system (\ref{1.1}),
(\ref{1.2}) at the point $(\bfx_0,t_0)$ then it cannot develop
only around point $\bfx_0$ itself (i.e.~only in set $\Var$). On
the other hand, ``large'' values of velocity must also be
necessarily transferred to the point $\bfx_0$ from the sides at
times $t<t_0$ with the speed increasing to infinity as $t\to
t_0-$.

\section{Proof of Theorem \ref{T1} -- part I} \label{S2}

{\bf Notation and the used regularity criterion.} \ We denote
\begin{displaymath}
\theta(t)\ :=\ \sqrt{a(t_0-t)} \qquad \mbox{and} \qquad G(\delta)\
:=\ \frac{1}{\delta^2}\int_{t_0-\delta^2}^{t_0}
\int_{|\bfx-\bfx_0|<\sqrt{a}\br\delta}|\bfv|^3\; \rmd\bfx\,
\rmd\br t.
\end{displaymath}
We split $G(\delta)$ to two parts:
\vspace{-8pt}
\begin{displaymath}
G(\delta)\ =\ G^I(\delta)+G^{I\hspace{-0.3pt}I}(\delta),
\end{displaymath}
where
\vspace{-8pt}
\begin{alignat}{3}
& G^I(\delta)\ &&:=\ \frac{1}{\delta^2}\int_{t_0-\delta^2}^{t_0}
\int_{\theta(t)<|\bfx-\bfx_0|<\sqrt{a}\br\delta}|\bfv|^3\;
\rmd\bfx\, \rmd\br t, \label{2.1*} \\ \noalign{\vskip 2pt}
& G^{I\hspace{-0.3pt}I}(\delta)\ &&:=\ \frac{1}{\delta^2}
\int_{t_0-\delta^2}^{t_0} \int_{|\bfx-\bfx_0|<\theta(t)}
|\bfv|^3\; \rmd\bfx\, \rmd\br t. \label{2.2*}
\end{alignat}
We will show that
\vspace{-8pt}
\begin{equation}
{\displaystyle\mathrel{\mathop{\lim\ \inf}_{\delta\to 0+}}}\ \
G(\delta)\ =\ 0. \label{2.3*}
\end{equation}
Since (\ref{2.3*}) implies the validity of condition (\ref{1.4*}),
it also implies that $(\bfx_0,t_0)$ is a regular point of solution
$\bfv$.

\vspace{4pt} \noindent
{\bf An estimate of $G^I(\delta)$.} \ Assume that $r>3$. Then
$G^I(\delta)$ can be estimated as follows:
\begin{align*}
G^I(\delta)\ &\leq\ \frac{1}{\delta^2}\int_{t_0-\delta^2}^{t_0}
\biggl(\int_{\theta(t)<|\bfx-\bfx_0|<\sqrt{a}\br\delta} |\bfv|^s\;
\rmd\bfx\biggr)^{\! \frac{3}{s}}\, \Bigl(\frac{4\pi\br(\sqrt{a}\br
\delta)^3}{3}\Bigr)^{1-\frac{3}{s}}\, \rmd\br t
\nonumber \\
& \leq\ \Bigl[\frac{4\pi
a^{\frac{3}{2}}}{3}\Bigr]^{1-\frac{3}{s}}\
\biggl[\br\int_{t_0-\delta^2}^{t_0}
\biggl(\int_{\theta(t)<|\bfx-\bfx_0|<\sqrt{a}\br\delta} |\bfv|^s\;
\rmd\bfx\biggr)^{\! \frac{r}{s}}\, \rmd t\br\biggr]^{\frac{3}{r}}.
\end{align*}
This implies, due to conditions (\ref{1.6*}) and (\ref{1.7*}),
that
\begin{equation}
\lim_{\delta\to 0+}\ G^I(\delta)\ =\ 0. \label{2.4*}
\end{equation}
We obtain the same information in the case $r=3$, too.

\vspace{4pt} \noindent
{\bf Transformation to the new coordinates $\bfxb,\tb$.} \ In
order to estimate $G^{I\hspace{-0.3pt}I}(\delta)$, we transform
the integral in (\ref{2.2*}) and the system (\ref{1.1}),
(\ref{1.2}) to the new coordinates $\bfxb$ and $\tb$, which are
related to $\bfx$ and $t$ through the formulas
\vspace{-2pt}
\begin{equation}
\bfxb=\frac{\bfx-\bfx_0}{\theta(t)}, \qquad
\tb=\int_{t_0-\rho^2}^t\frac{\rmd s}{\theta^2(s)}=\frac{1}{a}\,
\ln\frac{\rho^2 }{t_0-t}\br. \label{2.5*}
\end{equation}
Then
\vspace{-8pt}
\begin{equation}
t\ =\ t_0-\rho^2\ \rme^{-a\tb} \qquad \mbox{and} \qquad \theta(t)\
=\ \sqrt{a}\ \rho\ \rme^{-\frac{1}{2}a\tb}. \label{2.6*}
\end{equation}

\begin{center}
\setlength{\unitlength}{0.8mm}
\begin{picture}(115,57)
  \color{lightgrey}
  \linethickness{1mm} \put(58,52){\line(1,0){39}}
  \put(58,51.5){\line(1,0){39}} \put(58,51){\line(1,0){39}}
  \put(58,50.5){\line(1,0){39}} \put(58,50){\line(1,0){39}}
  \put(58,49.5){\line(1,0){39}} \put(58,49){\line(1,0){39}}
  \put(58,48.5){\line(1,0){39}} \put(58,48){\line(1,0){39}}
  \put(58,47.5){\line(1,0){39}} \put(58,47){\line(1,0){39}}
  \put(58,46.5){\line(1,0){39}} \put(58,46){\line(1,0){39}}
  \put(58,45.5){\line(1,0){39}} \put(58,45){\line(1,0){39}}
  \put(58,44.5){\line(1,0){39}} \put(58,44){\line(1,0){39}}
  \put(58,43.5){\line(1,0){39}} \put(58,43){\line(1,0){39}}
  \put(58,42.5){\line(1,0){39}} \put(58,42){\line(1,0){39}}
  \put(58,41.5){\line(1,0){39}} \put(58,41){\line(1,0){39}}
  \put(58,40.5){\line(1,0){39}} \put(58,40){\line(1,0){39}}
  \put(58,39.5){\line(1,0){38.2}} \put(58,39){\line(1,0){37}}
  \put(58,38.5){\line(1,0){36}} \put(58,38){\line(1,0){35}}
  \put(58,37.5){\line(1,0){34}} \put(58,37){\line(1,0){33}}
  \put(58,36.5){\line(1,0){32.1}} \put(58,36){\line(1,0){31.2}}
  \put(58,35.5){\line(1,0){30.4}} \put(58,35){\line(1,0){29.6}}
  \put(58,34.5){\line(1,0){28.9}} \put(58,34){\line(1,0){28.2}}
  \put(58,33.5){\line(1,0){27.5}} \put(58,33){\line(1,0){26.8}}
  \put(58,32.5){\line(1,0){26.1}} \put(58,32){\line(1,0){25.5}}
  \put(58,31.5){\line(1,0){24.8}} \put(58,31){\line(1,0){24.2}}
  \put(58,30.5){\line(1,0){23.6}} \put(58,30){\line(1,0){23.0}}
  \put(58,29.5){\line(1,0){22.5}} \put(58,29){\line(1,0){22.0}}
  \put(58,28.5){\line(1,0){21.45}} \put(58,28){\line(1,0){20.85}}
  \put(58,27.5){\line(1,0){20.35}} \put(58,27){\line(1,0){19.8}}
  \put(58,26.5){\line(1,0){19.3}} \put(58,26){\line(1,0){18.85}}
  \put(58,25.5){\line(1,0){18.4}} \put(58,25){\line(1,0){17.9}}
  \put(58,24.5){\line(1,0){17.45}} \put(58,24){\line(1,0){17.0}}
  \put(58,23.5){\line(1,0){16.55}} \put(58,23){\line(1,0){16.1}}
  \put(58,22.5){\line(1,0){15.65}} \put(58,22){\line(1,0){15.25}}
  \put(58,21.5){\line(1,0){14.85}} \put(58,21){\line(1,0){14.45}}
  \put(58,20.5){\line(1,0){14.05}} \put(58,20){\line(1,0){13.65}}
  \put(58,19.5){\line(1,0){13.25}} \put(58,19){\line(1,0){12.85}}
  \put(58,18.5){\line(1,0){12.45}} \put(58,18){\line(1,0){12.05}}
  \put(58,17.5){\line(1,0){11.7}} \put(58,17){\line(1,0){11.3}}
  \put(58,16.5){\line(1,0){10.93}} \put(58,16){\line(1,0){10.57}}
  \put(58,15.5){\line(1,0){10.23}} \put(58,15){\line(1,0){9.9}}
  \put(58,14.5){\line(1,0){9.6}} \put(58,14){\line(1,0){9.23}}
  \put(58,13.5){\line(1,0){8.91}} \put(58,13){\line(1,0){8.6}}
  \put(58,12.5){\line(1,0){8.28}} \put(58,12){\line(1,0){7.93}}
  \put(58,11.5){\line(1,0){7.60}} \put(58,11){\line(1,0){7.27}}
  \put(58,10.5){\line(1,0){6.95}} \put(58,10){\line(1,0){6.61}}
  \put(58,9.5){\line(1,0){6.30}} \put(58,9){\line(1,0){6.01}}
  \put(58,8.5){\line(1,0){5.70}} \put(58,8){\line(1,0){5.40}}
  \put(58,7.5){\line(1,0){5.12}} \put(58,7){\line(1,0){4.85}}
  \put(58,6.5){\line(1,0){4.51}} \put(58,6){\line(1,0){4.25}}
  \put(58,5.5){\line(1,0){4.00}} \put(58,5){\line(1,0){3.75}}
  \put(58,4.5){\line(1,0){3.45}} \put(58,4){\line(1,0){3.20}}
  \put(58,3.5){\line(1,0){2.91}} \put(58,3){\line(1,0){2.65}}
  \linethickness{0.5mm} \put(58.12,10){\line(0,-1){11.8}}
  \put(58.37,10){\line(0,-1){11.4}}
  \put(58.62,10){\line(0,-1){11.0}}
  \put(58.87,10){\line(0,-1){10.5}}
  \put(59.12,10){\line(0,-1){10.0}}
  \put(59.37,10){\line(0,-1){9.50}}
  \put(59.62,10){\line(0,-1){9.00}}
  \put(59.87,10){\line(0,-1){8.50}}
  \put(60.12,10){\line(0,-1){8.00}}
  \put(60.37,10){\line(0,-1){7.50}}
  \linethickness{1mm} \put(32,52){\line(-1,0){39}}
  \put(32,51.5){\line(-1,0){39}} \put(32,51){\line(-1,0){39}}
  \put(32,50.5){\line(-1,0){39}} \put(32,50){\line(-1,0){39}}
  \put(32,49.5){\line(-1,0){39}} \put(32,49){\line(-1,0){39}}
  \put(32,48.5){\line(-1,0){39}} \put(32,48){\line(-1,0){39}}
  \put(32,47.5){\line(-1,0){39}} \put(32,47){\line(-1,0){39}}
  \put(32,46.5){\line(-1,0){39}} \put(32,46){\line(-1,0){39}}
  \put(32,45.5){\line(-1,0){39}} \put(32,45){\line(-1,0){39}}
  \put(32,44.5){\line(-1,0){39}} \put(32,44){\line(-1,0){39}}
  \put(32,43.5){\line(-1,0){39}} \put(32,43){\line(-1,0){39}}
  \put(32,42.5){\line(-1,0){39}} \put(32,42){\line(-1,0){39}}
  \put(32,41.5){\line(-1,0){39}} \put(32,41){\line(-1,0){39}}
  \put(32,40.5){\line(-1,0){39}} \put(32,40){\line(-1,0){39}}
  \put(32,39.5){\line(-1,0){38.2}} \put(32,39){\line(-1,0){37}}
  \put(32,38.5){\line(-1,0){36}} \put(32,38){\line(-1,0){35}}
  \put(32,37.5){\line(-1,0){34}} \put(32,37){\line(-1,0){33}}
  \put(32,36.5){\line(-1,0){32.1}} \put(32,36){\line(-1,0){31.2}}
  \put(32,35.5){\line(-1,0){30.4}} \put(32,35){\line(-1,0){29.6}}
  \put(32,34.5){\line(-1,0){28.9}} \put(32,34){\line(-1,0){28.2}}
  \put(32,33.5){\line(-1,0){27.5}} \put(32,33){\line(-1,0){26.8}}
  \put(32,32.5){\line(-1,0){26.1}} \put(32,32){\line(-1,0){25.5}}
  \put(32,31.5){\line(-1,0){24.8}} \put(32,31){\line(-1,0){24.2}}
  \put(32,30.5){\line(-1,0){23.6}} \put(32,30){\line(-1,0){23.0}}
  \put(32,29.5){\line(-1,0){22.5}} \put(32,29){\line(-1,0){22.0}}
  \put(32,28.5){\line(-1,0){21.45}} \put(32,28){\line(-1,0){20.85}}
  \put(32,27.5){\line(-1,0){20.35}} \put(32,27){\line(-1,0){19.8}}
  \put(32,26.5){\line(-1,0){19.3}} \put(32,26){\line(-1,0){18.85}}
  \put(32,25.5){\line(-1,0){18.4}} \put(32,25){\line(-1,0){17.9}}
  \put(32,24.5){\line(-1,0){17.45}} \put(32,24){\line(-1,0){17.0}}
  \put(32,23.5){\line(-1,0){16.55}} \put(32,23){\line(-1,0){16.1}}
  \put(32,22.5){\line(-1,0){15.65}} \put(32,22){\line(-1,0){15.25}}
  \put(32,21.5){\line(-1,0){14.85}} \put(32,21){\line(-1,0){14.45}}
  \put(32,20.5){\line(-1,0){14.05}} \put(32,20){\line(-1,0){13.65}}
  \put(32,19.5){\line(-1,0){13.25}} \put(32,19){\line(-1,0){12.85}}
  \put(32,18.5){\line(-1,0){12.45}} \put(32,18){\line(-1,0){12.05}}
  \put(32,17.5){\line(-1,0){11.7}} \put(32,17){\line(-1,0){11.3}}
  \put(32,16.5){\line(-1,0){10.93}} \put(32,16){\line(-1,0){10.57}}
  \put(32,15.5){\line(-1,0){10.23}} \put(32,15){\line(-1,0){9.9}}
  \put(32,14.5){\line(-1,0){9.6}} \put(32,14){\line(-1,0){9.23}}
  \put(32,13.5){\line(-1,0){8.91}} \put(32,13){\line(-1,0){8.6}}
  \put(32,12.5){\line(-1,0){8.28}} \put(32,12){\line(-1,0){7.93}}
  \put(32,11.5){\line(-1,0){7.60}} \put(32,11){\line(-1,0){7.27}}
  \put(32,10.5){\line(-1,0){6.95}} \put(32,10){\line(-1,0){6.61}}
  \put(32,9.5){\line(-1,0){6.30}} \put(32,9){\line(-1,0){6.01}}
  \put(32,8.5){\line(-1,0){5.70}} \put(32,8){\line(-1,0){5.40}}
  \put(32,7.5){\line(-1,0){5.12}} \put(32,7){\line(-1,0){4.85}}
  \put(32,6.5){\line(-1,0){4.51}} \put(32,6){\line(-1,0){4.25}}
  \put(32,5.5){\line(-1,0){4.00}} \put(32,5){\line(-1,0){3.75}}
  \put(32,4.5){\line(-1,0){3.45}} \put(32,4){\line(-1,0){3.20}}
  \put(32,3.5){\line(-1,0){2.91}} \put(32,3){\line(-1,0){2.65}}
  \linethickness{0.5mm} \put(31.88,10){\line(0,-1){11.8}}
  \put(31.63,10){\line(0,-1){11.4}}
  \put(31.38,10){\line(0,-1){11.0}}
  \put(31.13,10){\line(0,-1){10.5}}
  \put(30.88,10){\line(0,-1){10.0}}
  \put(30.63,10){\line(0,-1){9.50}}
  \put(30.38,10){\line(0,-1){9.00}}
  \put(30.13,10){\line(0,-1){8.50}}
  \put(29.87,10){\line(0,-1){8.00}}
  \put(29.63,10){\line(0,-1){7.50}}
  \color{white}
  \qbezier(58,-2.45)(74,30.55)(97,39.55)
  \qbezier(31.5,-2.45)(15.5,30.55)(-7.5,39.55)
  \qbezier(31.6,-2)(16,30.55)(-7,39.7)
  \qbezier(31.1,-2)(15.5,30.55)(-7.5,39.7)
  \color{black}
  \thicklines \put(32,-2){\line(0,1){55}}
  \put(58,-2){\line(0,1){55}} \qbezier(32,-2)(16,31)(-7,40)
  \qbezier(58,-2)(74,31)(97,40) \put(32,-2){\line(1,0){26}}
  \thinlines
  \put(-10,-2){\vector(1,0){110}} \put(95,1){$\bfxb$}
  \put(45,-4){\line(0,1){16}}
  \put(45,21){\vector(0,1){34}} \put(47,50){$\tb$}
  \dashline[+20]{1.8}(-7,31)(100,31)
  \put(-6,-8){$|\bfxb|$\, :}
  \put(31.5,-8){$1$} \put(57.5,-8){$1$}
  \put(46,-8){$0$}
  \put(105,32){$\displaystyle \tb=\tb_{\delta}:=\frac{2}{a}\,
  \ln\frac{\rho}
  {\delta}$}
  \put(105,23){(corresponds to}
  \put(105,17){$t=t_0-\delta^2$)}
  \put(43,15){$\Vab$} \put(14,40){$\Uab$}
  \put(71,12){$|\bfxb|=\rme^{\frac{1}{2}a\tb}$}
  \put(-30,15){Fig.~2:}
  \end{picture}
\end{center}

\vspace{16pt} \noindent
The time interval $(t_0-\rho^2,t_0)$ on the $t$--axis now
corresponds to the interval $\bigl(0,\, \infty\bigr)$ on the
$\tb$--axis. Equations (\ref{2.5*}) represent a one--to--one
transformation of the parabolic region $\Var$ in the
$\bfx,t$--space onto the infinite stripe
\begin{displaymath}
\Vab\ :=\ \bigl\{(\bfxb,\tb)\in\R^4;\ \tb>0\ \mbox{and}\
|\bfxb|<1\bigr\}
\end{displaymath}
in the $\bfxb,\tb$--space. Similarly, (\ref{2.5*}) is a
one--to--one transformation of set $\Uar$ in the $\bfx,t$--space
onto
\vspace{-10pt}
\begin{displaymath}
\Uab\ :=\ \bigl\{(\bfxb,\tb)\in\R^4;\ \tb>0\ \mbox{and}\
1<|\bfxb|<\rme^{\frac{1}{2}a\tb}\bigr\}
\end{displaymath}
in the $\bfxb,\tb$--space. We denote
\vspace{-8pt}
\begin{equation}
\tb_{\delta}\ :=\ \frac{2}{a}\, \ln\br\frac{\rho}{\delta}.
\label{2.6a*}
\end{equation}
Then $\tb=\tb_{\delta}$ corresponds to $t=t_0-\delta^2$.
Obviously, \ $\rme^{-\frac{1}{2}a\tb_{\delta}}=\delta/\rho$ and
$\delta\to 0+$ corresponds to $\tb_{\delta}\to\infty$. If we put
\vspace{-4pt}
\begin{eqnarray}
\bfv(\bfx,t) &=& \frac{1}{\theta(t)}\
\bfvb\Bigl(\frac{\bfx-\bfx_0}{\theta(t)},\ \frac{1}{a}\
\ln\frac{\rho^2}{t_0-t}\Bigr), \label{2.7*} \\ [2pt]
p(\bfx,t) &=& \frac{1}{\theta^2(t)}\
\pb\Bigl(\frac{\bfx-\bfx_0}{\theta(t)},\ \frac{1}{a}\
\ln\frac{\rho^2}{t_0-t}\Bigr) \label{2.8*}
\end{eqnarray}
then functions $\bfvb$, $\pb$ represent a suitable weak solution
of the system of equations
\begin{eqnarray}
\partial_{\tb}\bfvb+\bfvb\cdot\nablab\bfvb &=& -\nablab\pb+
\nu\Deltab\bfvb-{\textstyle\frac{1}{2}}\br a\br\bfvb-
{\textstyle\frac{1}{2}}\br a\bfxb\cdot\nablab\bfvb, \label{2.9*} \\
[2pt]
\divb\bfvb &=& 0 \label{2.10*}
\end{eqnarray}
in any bounded sub--domain of $\Qab:=\bigl\{(\bfxb,\tb)\in\R^4;\
\tb>0\ \mbox{and}\ |\bfxb|<\rme^{\frac{1}{2}a\tb}\bigr\}$. (The
symbols $\nablab$ and $\Deltab$ denote the nabla operator and the
Laplace operator with respect to the spatial variable $\bfxb$.)

\vspace{4pt} \noindent
{\bf Sets $\M{R_1}{R_2}(t)$, $\Mb{R_1}{R_2}$ and $\Bb{R_1}$.} \
Let $0<R_1<R_2$. We denote $\M{R_1}{R_2}(t):=\{\bfx\in\R^3;\
R_1\br\theta(t)<|\bfx-\bfx_0|<R_2\br\theta(t)\}$ and
$\Mb{R_1}{R_2}:=\{\bfxb\in\R^3;\ R_1<|\bfxb|<R_2\}$. In order to
keep a consistent notation, we also denote by $\Bb{R_1}$ the ball
$\{\bfxb\in\R^3;\ |\bfxb|<R_1\}$. The mapping
$\bfx\mapsto\bfx'=(\bfx-\bfx_0)/ \theta(t)$ is a one--to--one
transformation of $\M{R_1}{R_2}(t)$ onto $\Mb{R_1}{R_2}$ and
$B_{R_1\theta(t)}(\bfx_0)$ onto $\Bb{R_1}$ at each time instant
$t\in(t_0-\rho^2,t_0)$.

\vspace{4pt} \noindent
{\bf The first estimate of $G^{I\hspace{-0.3pt}I}(\delta)$.} \
Recall that $\tb=\tb_{\delta}=2a^{-1}\, \ln(\rho/\delta)$
corresponds to $t=t_0-\delta^2$ (see formulas (\ref{2.6*}) and
(\ref{2.6a*})). Suppose that $\varphi$ is an infinitely
differentiable function in $\R^3$ such that
\begin{equation}
\varphi(\bfxb)\ \ \left\{ \begin{array}{ll} =1 & \mbox{for}\
|\bfxb|\leq 3, \\ [4pt] \in[0,1] & \mbox{for}\ 3<|\bfxb|\leq 4, \\
[4pt] =0 & \mbox{for}\ |\bfxb|>4. \end{array} \right.
\label{2.12*}
\end{equation}
Transforming $G^{I\hspace{-0.3pt}I}(\delta)$ to the variables
$\bfxb$, $\tb$, we get
\begin{align}
G^{I\hspace{-0.3pt}I}& (\delta)\, =\,
\frac{a\br\rho^2}{\delta^2}\int_{\tb_{\delta}}^{\infty}
\int_{\Bb{1}} |\bfv'|^3\; \rmd\bfx'\ \rme^{-a\tb}\, \rmd\br\tb\leq
\frac{a\br\rho^2}{\delta^2}\int_{\tb_{\delta}}^{\infty}
\|\bfvb\|_{6;\, \Bb{1}}^{\frac{3}{2}}\, \|\bfvb\|_{2;\,
\Bb{1}}^{\frac{3}{2}}\, \rme^{-a\tb}\; \rmd\tb \nonumber \\
\noalign{\vskip 2pt}
& \leq\ \frac{a\br\rho^2}{\delta^2}\int_{\tb_{\delta}}^{\infty}
\|\varphi\br\bfvb\|_{6;\, \Bb{4}}^{\frac{3}{2}}\,
\|\varphi\br\bfvb\|_{2;\,
\Bb{4}}^{\frac{3}{2}}\, \rme^{-a\tb}\; \rmd\tb \nonumber \\
\noalign{\vskip 2pt}
& \leq\ \frac{2}{3^{\frac{3}{4}}\br\pi}\,
\frac{a\br\rho^2}{\delta^2} \int_{\tb_{\delta}}^{\infty}
\|\nablab(\varphi\br\bfvb)\|_{2;\, \Bb{4}}^{\frac{3}{2}}\,
\|\varphi\br\bfvb\|_{2;\,
\Bb{4}}^{\frac{3}{2}}\, \rme^{-a\tb}\; \rmd\tb \nonumber \\
\noalign{\vskip 2pt}
& \leq\ \frac{2}{3^{\frac{3}{4}}\br\pi}\,
\frac{a\br\rho^2}{\delta^2}\,
\biggl(\int_{\tb_{\delta}}^{\infty}\|\nablab(\varphi\br\bfvb)
\|_{2;\, \Bb{4}}^2\, \rme^{-\frac{2}{3}a\tb}\, \rmd\tb\biggr)^{\!
\frac{3}{4}}\,
\biggl(\int_{\tb_{\delta}}^{\infty}\|\varphi\br\bfvb\|_{2;\,
\Bb{4}}^6\, \rme^{-2a\tb}\, \rmd\tb\biggr)^{\! \frac{1}{4}}
\nonumber \\
& =\ \frac{2}{3^{\frac{3}{4}}\br\pi}\, a\,
\biggl(\int_{\tb_{\delta}}^{\infty}\|\nablab
(\varphi\br\bfvb)\|_{2;\, \Bb{4}}^2\,
\rme^{-\frac{2}{3}a(\tb-\tb_{\delta})}\, \rmd\tb\biggr)^{\!
\frac{3}{4}} \nonumber \\ \noalign{\vskip-4pt}
& \hspace{100pt} \cdot
\biggl(\int_{\tb_{\delta}}^{\infty}\|\varphi\br\bfvb\|_{2;\,
\Bb{4}}^6\, \rme^{-2a(\tb-\tb_{\delta})}\, \rmd\tb\biggr)^{\!
\frac{1}{4}}. \label{2.13*}
\end{align}
The factor $2/(3^{\frac{3}{4}}\br\pi)$ comes from Sobolev's
inequality, see e.g.~\cite[p.~34]{Ta}. In order to estimate the
integrals on the right hand side of (\ref{2.13*}), we use the next
lemma and the generalized energy inequality in the
$\bfxb,\tb$--space.

\vspace{4pt} \noindent
\begin{lemma}
\label{L4}
Assume that $0<\alpha\leq r$, \ $0<\beta\leq s$, \ $R>1$, \
$\tb_{\delta}>2a^{-1}\, \ln\br R$, \ and at least one of the two
conditions

\vspace{10pt}
(a) \quad $\alpha=r$, \ $\omega\geq 0$, \qquad (b) \quad
$\alpha<r$, \ $\omega>0$

\vspace{10pt} \noindent
holds. Then
\begin{equation}
\int_{\tb_{\delta}}^{\infty} \biggl(
\int_{\Mb{1}{R}}|\bfvb|^{\beta}\; \rmd\bfxb\biggr)^{\!
\frac{\alpha}{\beta}}\; \rme^{-\omega a(\tb-\tb_{\delta})}\;
\rmd\br\tb\ \longrightarrow\ 0 \qquad \mbox{as}\ \
\tb_{\delta}\to\infty. \label{2.11*}
\end{equation}
\end{lemma}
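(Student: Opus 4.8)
The plan is to transport the hypothesis (\ref{1.6*}) into the $\bfxb,\tb$\,-variables, to notice that the Serrin--critical relation $2/r+3/s=1$ makes the transformation weight trivial, and then to extract (\ref{2.11*}) by two elementary Hölder estimates together with the fact that the tail of a convergent integral vanishes.

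First I would rewrite the inner spatial integral in (\ref{1.6*}) in the new coordinates. By (\ref{2.7*}), the substitution $\bfxb=(\bfx-\bfx_0)/\theta(t)$ (with Jacobian $\theta^3(t)$) and the identity $\rmd t=\theta^2(t)\,\rmd\tb$ implied by (\ref{2.5*}), one has, for each fixed $\tb>0$,
\[
\int_{\sqrt{a(t_0-t)}<|\bfx-\bfx_0|<\sqrt a\,\rho}|\bfv|^s\,\rmd\bfx\ =\ \theta^{3-s}(t)\int_{\Mb{1}{\rme^{a\tb/2}}}|\bfvb|^s\,\rmd\bfxb,
\]
since $\sqrt a\,\rho/\theta(t)=\rme^{a\tb/2}$ by (\ref{2.6*}). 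Raising to the power $r/s$ and multiplying by $\rmd t=\theta^2(t)\,\rmd\tb$, the total exponent of $\theta(t)$ that appears is $(3-s)\frac{r}{s}+2$, which vanishes precisely because of (\ref{1.7*}) (indeed $\frac{3r}{s}=r-2$). Hence (\ref{1.6*}) is \emph{identical} to the weight--free statement
\[
\int_0^\infty\biggl(\int_{\Mb{1}{\rme^{a\tb/2}}}|\bfvb|^s\,\rmd\bfxb\biggr)^{\! r/s}\rmd\tb\ <\ \infty.
\]
In particular, since $\Mb{1}{R}\subset\Mb{1}{\rme^{a\tb/2}}$ whenever $\tb>2a^{-1}\ln R$, the function $F(\tb):=\bigl(\int_{\Mb{1}{R}}|\bfvb|^s\,\rmd\bfxb\bigr)^{r/s}$ is integrable on $(2a^{-1}\ln R,\infty)$, hence also on $(\tb_\delta,\infty)$.

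Next I would reduce the general exponents $\alpha,\beta$ to this $F$. As $\Mb{1}{R}$ has finite Lebesgue measure and $\beta\le s$, Hölder's inequality gives $\int_{\Mb{1}{R}}|\bfvb|^\beta\,\rmd\bfxb\le|\Mb{1}{R}|^{1-\beta/s}\bigl(\int_{\Mb{1}{R}}|\bfvb|^s\,\rmd\bfxb\bigr)^{\beta/s}$, whence $\bigl(\int_{\Mb{1}{R}}|\bfvb|^\beta\,\rmd\bfxb\bigr)^{\alpha/\beta}\le C\,F(\tb)^{\alpha/r}$ with $C$ a finite constant depending only on $\alpha,\beta,s,R$. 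It then suffices to show $\int_{\tb_\delta}^\infty F(\tb)^{\alpha/r}\,\rme^{-\omega a(\tb-\tb_\delta)}\,\rmd\tb\to0$ as $\tb_\delta\to\infty$. In case (a), where $\alpha=r$ and $\omega\ge0$, this is immediate: $F^{\alpha/r}=F$, the exponential factor is $\le1$ on $(\tb_\delta,\infty)$, and $\int_{\tb_\delta}^\infty F\,\rmd\tb\to0$ because $F\in L^1$. In case (b), where $\alpha<r$ and $\omega>0$, I would apply Hölder's inequality with the conjugate exponents $r/\alpha$ and $r/(r-\alpha)$:
\[
\int_{\tb_\delta}^\infty F^{\alpha/r}\,\rme^{-\omega a(\tb-\tb_\delta)}\,\rmd\tb\ \le\ \biggl(\int_{\tb_\delta}^\infty F\,\rmd\tb\biggr)^{\! \alpha/r}\biggl(\int_{\tb_\delta}^\infty\rme^{-\frac{\omega a r}{r-\alpha}(\tb-\tb_\delta)}\,\rmd\tb\biggr)^{\! (r-\alpha)/r}.
\]
Here the second factor equals the $\tb_\delta$\,-independent finite constant $\bigl(\frac{r-\alpha}{\omega a r}\bigr)^{(r-\alpha)/r}$ (this is the only place $\omega>0$ is used), while the first factor tends to $0$ because $F\in L^1$. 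Combining with the bound $C\,F(\tb)^{\alpha/r}$ this establishes (\ref{2.11*}) in both cases.

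The only substantive point in the argument is the exponent computation at the start: everything rests on the fact that the Serrin--critical relation $2/r+3/s=1$ forces the transformation weight $\theta(t)^{(3-s)r/s+2}$ to be identically $1$, so that the finite Serrin integral (\ref{1.6*}) becomes \emph{exactly} an $L^1$-in-$\tb$ bound in the new variables; after that there is no real obstacle. The remaining care is merely bookkeeping the exponents $\alpha,\beta,\omega$ and matching the alternatives (a) and (b) to the last Hölder step --- $\omega\ge0$ suffices to discard the exponential when $\alpha=r$, whereas $\omega>0$ is needed in order to integrate the exponential in $\tb$ when $\alpha<r$.
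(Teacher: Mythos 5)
Your proof is correct and is essentially the same argument as the paper's: the same two H\"older inequalities (in space to pass from $\beta$ to $s$, in time to pass from $\alpha$ to $r$, with the exponential absorbed only in case (b)), the same cancellation of the transformation weight via $2/r+3/s=1$, and the same conclusion from the vanishing tail of the convergent integral (\ref{1.6*}). The only cosmetic difference is the direction of the change of variables --- you push the hypothesis forward into the $\bfxb,\tb$ coordinates, whereas the paper pulls the integral (\ref{2.11*}) back to the $\bfx,t$ coordinates --- but the exponent arithmetic is identical.
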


\begin{proof}
We use $C$ as a generic constant independent of $\delta$. In order
to indicate that $C$ may depend on other quantities, we often
write e.g.~$C(R,a)$, $C(R,\beta)$ or similar. We have
\begin{align}
\int_{\tb_{\delta}}^{\infty} & \biggl(
\int_{\Mb{1}{R}}|\bfvb|^{\beta}\; \rmd\bfxb\biggr)^{\!
\frac{\alpha}{\beta}}\; \rme^{-\omega a(\tb-\tb_{\delta})}\;
\rmd\br\tb\ \leq\ C(R,\beta)\int_{\tb_{\delta}}^{\infty}\biggl(
\int_{\Mb{1}{R}}|\bfvb|^s\; \rmd\bfxb\biggr)^{\!
\frac{\alpha}{s}}\;
\rme^{-\omega a(\tb-\tb_{\delta})}\; \rmd\br\tb \nonumber \\
\noalign{\vskip 1pt}
&=\ \frac{C(R,\beta,\rho)}{\delta^{2\omega}}
\int_{t_0-\delta^2}^{t_0} \biggl(\int_{\M{1}{R}(t)} |\bfv|^s\;
\rmd\bfx\biggr)^{\! \frac{\alpha}{s}}\,
\theta^{\br2\omega+\alpha-3\frac{\alpha}{s}-2}(t)\; \rmd\br t.
\label{2.18*}
\end{align}
If condition (a) holds then the exponent
$2\omega+\alpha-3\alpha/s-2$ equals $2\omega+r\, (
1-3/s-2/r)=2\omega$. Hence the right hand side of (\ref{2.18*}) is
less than or equal to
\begin{displaymath}
C(R,\beta,\rho)\int_{t_0-\delta^2}^{t_0} \biggl(\int_{\M{1}{R}(t)}
|\bfv|^s\; \rmd\bfx\biggr)^{\! \frac{r}{s}}\; \rmd\br t.
\end{displaymath}
This tends to zero as $\delta\to 0+$ due to (\ref{1.6*}). If
condition (b) holds then the right hand side of (\ref{2.18*}) is
less than or equal to
\begin{displaymath}
\frac{C(R,\beta,\rho)}{\delta^{2\omega}}\, \biggl[
\int_{t_0-\delta^2}^{t_0} \biggl(\int_{\M{1}{R}(t)} |\bfv|^s\;
\rmd\bfx\biggr)^{\! \frac{r}{s}}\, \rmd\br
t\biggr]^{\frac{\alpha}{r}}\, \biggl[\int_{t_0-\delta^2}^{t_0}
\theta^{\left[2\omega+\alpha-3\frac{\alpha}{s}-2\right]\,
\frac{r}{r-\alpha}}(t)\; \rmd\br t\biggr]^{\frac{r-\alpha}{r}}.
\end{displaymath}
The last factor on the right hand side is
\begin{displaymath}
\biggl[\int_{t_0-\delta^2}^{t_0}
\theta^{\left[2\omega+\alpha-3\frac{\alpha}{s}-2\right]\,
\frac{r}{r-\alpha}}(t)\; \rmd\br t\biggr]^{\frac{r-\alpha}{r}}\ =\
\biggl[\int_{t_0-\delta^2}^{t_0} \theta^{-2+\frac{2\omega
r}{r-\alpha}}(t)\; \rmd\br t\biggr]^{\frac{r-\alpha}{r}}\ =\
C(a,\omega)\ \delta^{2\omega}.
\end{displaymath}
This shows that the right hand side of (\ref{2.18*}) tends to zero
for $\delta\to 0+$ in the case of condition (b) as well. The proof
is completed.
\end{proof}

\vspace{4pt} \noindent
{\bf The generalized energy inequality in the $\bfxb$,
$\tb$--space.} \ Since $\bfvb$, $\pb$ is a suitable weak solution
to the system (\ref{2.9*}), (\ref{2.10*}), it satisfies (by
analogy with (\ref{1.3*})) the generalized energy inequality
\begin{align}
2\nu\int_{\Qab}|\nablab\bfvb|^2\, \phi\; \rmd\bfxb\, \rmd\br\tb\
\leq\ \int_{\Qab}\bigl[\, & |\bfvb|^2\,
\bigl(\partial_{\tb}\phi+\nu\Deltab\phi\bigr)+\bigl(|\bfvb|^2+
2\pb\bigr)\, \bfvb\cdot\nablab\phi \nonumber \\
& +{\textstyle\frac{1}{2}}\br a\, |\bfvb|^2\,
\phi+{\textstyle\frac{1}{2}}\br a\, (\bfxb\cdot\nablab\phi)\,
|\bfvb|^2 \br\bigr]\; \rmd\bfxb\, \rmd\br\tb \label{2.14*}
\end{align}
for every non--negative function $\phi$ from $C^{\infty}_0(\Qab)$.

Due to technical reasons, we further assume that
$0<\delta<\rho/4$. This assumption implies that
$\tb_{\delta}>2a^{-1}\, \ln\br 4$.

If function $\phi$ in inequality (\ref{2.14*}) has the form
$\phi(\bfxb,\tb)=\varphi(\bfxb)\, \vartheta(\tb)$, where $\varphi$
is defined in (\ref{2.12*}) and $\vartheta$ is a
$C^{\infty}$--function in $\bigl(2a^{-1}\br\ln\br 4,\infty\bigr)$
with a compact support, we get
\begin{align}
& 2\nu\int_0^{\infty}\int_{\Bb{4}}|\nablab\bfvb|^2\,
\varphi^2(\bfxb)\, \vartheta(\tb)\; \rmd\bfxb\, \rmd\br\tb
\nonumber \\ \noalign{\vskip 2pt}
& \leq\ \int_0^{\infty}\int_{\Bb{4}}\bigl[\,
|\varphi(\bfxb)\br\bfvb|^2\, \dot{\vartheta}(\tb)+\nu\,
|\bfvb|^2\, \Delta'\varphi^2(\bfxb)\, \vartheta(\tb)
+\bigl(|\bfvb|^2+2\pb\bigr)\, \bfvb\cdot\nablab\varphi^2(\bfxb)\,
\vartheta(\tb) \nonumber \\
& \hspace{70pt} +{\textstyle\frac{1}{2}}\br a\,
|\varphi(\bfxb)\br\bfvb|^2\, \vartheta(\tb)+
{\textstyle\frac{1}{2}}\br a\, (\bfxb\cdot\nablab
\varphi^2(\bfxb))\, |\bfvb|^2\, \vartheta(\tb) \br\bigr]\;
\rmd\bfxb\, \rmd\br\tb. \label{2.15*}
\end{align}
Choosing $\vartheta(\tb)=\rme^{-\frac{2}{3}a(\tb-\tb_{\delta})}\,
[\cR_{1/m}\chi](\tb)$, where $\chi$ is the characteristic function
of the interval $(\tb_{\delta},\tb)$ and $\cR_{1/m}$ is a
one--dimensional mollifier with the kernel supported in
$(-1/m,1/m)$, and letting $m\to\infty$, we obtain
\begin{align}
\|\varphi\br & \bfvb(\, .\, ,\tb)\|_{2;\, \Bb{4}}^2\,
\rme^{-\frac{2}{3}a(\tb-\tb_{\delta})}+\frac{a}{6}
\int_{\tb_{\delta}}^{\tb}\|\varphi\br\bfvb(\, .\, ,\tau)\|_{2;\,
\Bb{4}}^2\, \rme^{-\frac{2}{3}a(\tau-\tb_{\delta})}\;
\rmd\tau \nonumber \\
& \hspace{17pt}
+2\nu\int_{\tb_{\delta}}^{\tb}\|\varphi\br\nablab\bfvb(\, .\,
,\tau)\|_{2;\, \Bb{4}}^2\,
\rme^{-\frac{2}{3}a(\tau-\tb_{\delta})}\; \rmd\tau \nonumber \\
& \leq\ \|\varphi\br\bfvb(\, .\, ,\tb_{\delta})\|_{2;\, \Bb{4}}^2
+\int_{\tb_{\delta}}^{\tb}\int_{\Bb{4}} \bigl[
-\nu\nablab|\bfvb|^2\cdot \nablab\varphi^2+
\bigl(|\bfvb|^2+2\pb\bigr)\, (\bfvb\cdot\nablab\varphi^2)
\nonumber \\
& \hspace{144pt} + \bigl({\textstyle\frac{1}{2}}\br
a\bfxb\cdot\nablab\varphi^2\bigr)\, |\bfvb|^2 \bigr]\; \rmd\bfxb\;
\rme^{-\frac{2}{3}a(\tau-\tb_{\delta})}\; \rmd\tau. \label{2.16*}
\end{align}
This inequality holds for a.a.~$\tb_{\delta}>2a^{-1}\, \ln\br 4$
and all $\tb\geq\tb_{\delta}$. Note that it can also be formally
obtained, multiplying (\ref{2.9*}) by $2\bfvb\, \varphi^2\,
\rme^{-\frac{2}{3}a (\tb-\tb_{\delta})}$ and integrating in
$\Bb{4}\times (\tb_{\delta},\tb)$. The second term on the left
hand side of inequality (\ref{2.16*}) comes from the integral of
$|\varphi(\bfxb)\br\bfvb|^2\, [\dot{\vartheta}(\tb)+
\frac{1}{2}a\br\vartheta(\tb)]$ on the right hand side of
(\ref{2.15*}). Using the formula $\varphi^2\, |\nablab\bfvb|^2=
|\nablab(\varphi\bfvb)|^2- |\nablab\varphi^2|\, |\bfvb|^2
-\frac{1} {2}\, \nablab\varphi^2\cdot\nablab|\bfvb|^2$, we can
further rewrite (\ref{2.16*}) as follows:
\begin{align}
\|\varphi\br & \bfvb(\, .\, ,\tb)\|_{2;\, \Bb{4}}^2\,
\rme^{-\frac{2}{3}a(\tb-\tb_{\delta})}+\frac{a}{6}
\int_{\tb_{\delta}}^{\tb}\|\varphi\br\bfvb(\, .\, ,\tau)\|_{2;\,
\Bb{4}}^2\, \rme^{-\frac{2}{3}a(\tau-\tb_{\delta})}\;
\rmd\tau \nonumber \\
&
\hspace{17pt}+2\nu\int_{\tb_{\delta}}^{\tb}\|\nablab(\varphi
\bfvb(\, .\, ,\tau))\|_{2;\, \Bb{4}}^2\,
\rme^{-\frac{2}{3}a(\tau-\tb_{\delta})}\; \rmd\tau \nonumber \\
& \leq\ \|\varphi\br\bfvb(\, .\, ,\tb_{\delta})\|_{2;\,
\Bb{4}}^2+\int_{\tb_{\delta}}^{\tb} \int_{\Bb{4}}\bigl[2\nu\,
|\nablab\varphi|^2\, |\bfvb|^2+\bigl(|\bfvb|^2+2\pb\bigr)\,
(\bfvb\cdot\nablab\varphi^2) \nonumber \\
& \hspace{144pt} +\bigl({\textstyle\frac{1}{2}}\br
a\bfxb\cdot\nablab\varphi^2\bigr)\, |\bfvb|^2\bigr]\; \rmd\bfxb\;
\rme^{-\frac{2}{3}a(\tau-\tb_{\delta})}\; \rmd\tau.
\label{2.17*}
\end{align}

\section{Estimates of the right hand side of inequality (\ref{2.17*})}
\label{S3}

The right hand side of inequality (\ref{2.17*}) can be estimated
from above by the sum of \\ $\|\varphi\bfvb(\, .\,
,\tb_{\delta})\|_{2;\, \Bb{4}}^2$ and the two terms $K^I(\delta)$,
$K^{I\hspace{-0.3pt}I}(\delta)$, where
\newpage
\begin{alignat*}{3}
& K^I(\delta) &&:=\ \int_{\tb_{\delta}}^{\infty}
\int_{\Mb{3}{4}}\bigl( 2\nu\, |\nablab\varphi|^2\,
|\bfvb|^2+|\bfvb|^2\, |\bfvb\cdot\nablab\varphi^2| \\
\noalign{\vskip-1pt}
& && \hspace{60pt} + \bigl|{\textstyle\frac{1}{2}}\br
a\bfxb\cdot\nablab\varphi^2|\ |\bfvb|^2\bigr)\; \rmd\bfxb\,
\rme^{-\frac{2}{3}\br a(\tau-\tb_{\delta})}\, \rmd\tau,
\\ \noalign{\vskip 4pt}
& K^{I\hspace{-0.3pt}I}(\delta)\ &&:=\
\int_{\tb_{\delta}}^{\infty}\int_{\Mb{3}{4}} \bigl|\br 2\pb\,
(\bfvb\cdot\nablab\varphi^2)\bigr|\; \rmd\bfxb\;
\rme^{-\frac{2}{3}a(\tau-\tb_{\delta})}\; \rmd\tau.
\end{alignat*}
Due to Lemma \ref{L4}, $K^I(\delta)\to 0$ for $\delta\to 0+$.
$K^{I\hspace{-0.3pt}I}(\delta)$ can be estimated as follows:
\phantom{$\cn21$}
\begin{align}
& K^{I\hspace{-0.3pt}I}(\delta)\ \leq\
C\int_{\tb_{\delta}}^{\infty}\int_{\Mb{3}{4}}|\pb|\, |\bfvb|\;
\rmd\bfxb\; \rme^{-\frac{2}{3}a(\tau-\tb_{\delta})}\; \rmd\tau
\nonumber \\ \noalign{\vskip 1pt}
& \leq\ C\, \biggl[\br\int_{\tb_{\delta}}^{\infty}\!\biggl(
\int_{\Mb{3}{4}}\!\!|\bfvb|^{\frac{s}{s-2}}\, \rmd\bfxb\biggr)^{\!
\frac{r(s-2)}{s}}\rmd\tau\br \biggr]^{\frac{1}{r}}\
\biggl[\br\int_{\tb_{\delta}}^{\infty}\!
\biggl(\int_{\Mb{3}{4}}\!\!|\pb|^{\frac{s}{2}}\,
\rmd\bfxb\biggr)^{\! \frac{2}{s}\, \frac{r}{r-1}}\,
\rme^{-\frac{2}{3}\, \frac{r}{r-1}\, a(\tau-\tb_{\delta})}\;
\rmd\tau\br\biggr]^{\frac{r-1}{r}} \nonumber
\\ \noalign{\vskip 4pt}
& =\ \cc21(\delta)\ {\cP}^{\frac{r-1}{r}}(\delta),
\label{3.1*}
\end{align}
where
\vspace{-8pt}
\begin{align*}
\cc21(\delta)\ &:=\ C\ \biggl[\int_{\tb_{\delta}}^{\infty}
\biggl(\int_{\Mb{3}{4}}|\bfvb|^{\frac{s}{s-2}}\;
\rmd\bfxb\biggr)^{\! \frac{r(s-2)}{s}}\,
\rmd\tau\biggr]^{\frac{1}{r}}\ \longrightarrow\ 0 \quad
\mbox{for}\ \delta\to 0+ \quad \mbox{(due to Lemma \ref{L4})},
\\
\cP(\delta)\ &:=\ \int_{\tb_{\delta}}^{\infty}\biggl(
\int_{\Mb{3}{4}}|\pb|^{\frac{s}{2}}\,
\rmd\bfxb\biggr)^{\frac{2}{s}\, \frac{r}{r-1}}\,
\rme^{-\frac{2}{3}\, \frac{r}{r-1}\, a(\tau-\tb_{\delta})}\;
\rmd\tau.
\end{align*}
In order to estimate $\cP(\delta)$, we use the next lemma.

\begin{lemma}
\label{L2}
Let $\, 0<\gamma<1$. Then there exist constants $\cn31$, $\cn32$
and $\cn33$ so that the inequality
\begin{align}
\int_{\Mb{3}{4}}|\pb|^{\frac{s}{2}}\, \rmd\bfxb\ &\leq\ \cc31\,
\biggl(\int_{\Bb{1}}|\bfvb|^2\, \rmd\bfxb\biggr)^{\!
\frac{s}{2}}+\cc32\int_{\Mbu{$1$\hspace{2.7pt}}{$\rme^{a\tb
\hspace{-1.5pt}/2}$}} |\bfvb|^s\, \rmd\bfxb \nonumber \\
& \hspace{17pt} +\cc33\,
\biggl(\rme^{-\frac{3}{2}a\tb}\int_{\Mbuu{$\gamma\br\rme^{a\tb
\hspace{-1.5pt}/2}$}{$\rme^{a\tb \hspace{-1.5pt}/2}$}}
\bigl[\br|\bfvb|^2+|\pb|\br\bigr]\; \rmd\bfxb\biggr)^{\!
\frac{s}{2}} \label{3.2*}
\end{align}
holds for a.a.~$\tb>2a^{-1}\, \ln(4/\gamma)$.
\end{lemma}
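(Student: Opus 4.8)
The plan is to use that the transformed pressure obeys a Poisson equation and to decompose it, on each time slice, into a localized Calder\'on--Zygmund part, a far--field part governed by a smooth kernel, and a harmonic remainder; the last piece will be the crux. Throughout, fix $\tb>2a^{-1}\ln(4/\gamma)$ and write $\varrho:=\rme^{a\tb/2}$, so that $\{1<|\bfxb|<\rme^{a\tb/2}\}=\Mb{1}{\varrho}$, $\{\gamma\rme^{a\tb/2}<|\bfxb|<\rme^{a\tb/2}\}=\Mb{\gamma\varrho}{\varrho}$, $\rme^{-\frac{3}{2}a\tb}=\varrho^{-3}$, and $\overline{\Mb{3}{4}}\subset\Bb{\gamma\varrho}\subset\Bb{\varrho}$ --- this inclusion being exactly what the hypothesis on $\tb$ buys us. First I would take $\divb$ of the transformed momentum equation (\ref{2.9*}) and use (\ref{2.10*}): the terms $\partial_{\tb}\bfvb$, $\nu\Deltab\bfvb$, $\frac12 a\bfvb$ have vanishing divergence, and $\divb(\bfxb\cdot\nablab\bfvb)=\divb\bfvb+\bfxb\cdot\nablab(\divb\bfvb)=0$, so that
\begin{equation*}
-\Deltab\pb\ =\ \sum_{i,j=1}^{3}\partial_i'\partial_j'\bigl(v_i'v_j'\bigr)\qquad\text{in }\Bb{\varrho}
\end{equation*}
holds distributionally, for a.a.~$\tb$. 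Note also that, by (\ref{1.6*}), $\bfvb(\, .\, ,\tb)\in\bfL^s(\Mb{1}{\varrho})$, while $\bfvb(\, .\, ,\tb)\in\bfL^2(\Bb{\varrho})$ automatically.

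Next I would fix a cut--off $\xi\in C_0^{\infty}(\Bb{2})$ with $\xi\equiv1$ on $\Bb{1}$, extend $v_i'v_j'$ by zero outside $\Bb{\varrho}$, and put $\pb=\pb_1+\pb_2+\pb_3$, where $\pb_1$ and $\pb_2$ denote the Newtonian potentials in $\R^3$ of $\partial_i'\partial_j'\bigl((1-\xi)v_i'v_j'\bigr)$ and of $\partial_i'\partial_j'\bigl(\xi\, v_i'v_j'\bigr)$, respectively, so that $\pb_3:=\pb-\pb_1-\pb_2$ is harmonic in $\Bb{\varrho}$. Since the source of $\pb_1$ is supported in $\Mb{1}{\varrho}$, where by hypothesis $\bfvb\in\bfL^s$, the Calder\'on--Zygmund (Riesz transform) estimate gives $\|\pb_1\|_{s/2;\, \R^3}\le C\|\bfvb\|_{s;\, \Mb{1}{\varrho}}^2$; hence $\int_{\Mb{3}{4}}|\pb_1|^{s/2}\le C\int_{\Mb{1}{\varrho}}|\bfvb|^s$, and, by H\"older on $\Mb{\gamma\varrho}{\varrho}$ (of measure $\sim\varrho^3$), $\varrho^{-3}\int_{\Mb{\gamma\varrho}{\varrho}}|\pb_1|\le C\varrho^{-6/s}\|\bfvb\|_{s;\, \Mb{1}{\varrho}}^2$, whose power $s/2$ is again $\le C\int_{\Mb{1}{\varrho}}|\bfvb|^s$. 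The source of $\pb_2$ lies in $\Bb{2}$, whereas $\pb_2$ is only needed on $\Mb{3}{4}$ and on $\Mb{\gamma\varrho}{\varrho}$; there, after integrating by parts twice, the kernel $\partial_i'\partial_j'|\bfxb-\bfyb|^{-1}$ is dominated by $C|\bfxb-\bfyb|^{-3}$, i.e.~by $C$ on $\Mb{3}{4}$ and by $C\varrho^{-3}$ on $\Mb{\gamma\varrho}{\varrho}$, so that $|\pb_2|\le C\int_{\Bb{2}}|\bfvb|^2$ on $\Mb{3}{4}$ and $|\pb_2|\le C\varrho^{-3}\int_{\Bb{2}}|\bfvb|^2$ on the outer annulus. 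Only $\|\bfvb\|_{2;\, \Bb{2}}$ enters here --- which is precisely why the first term of (\ref{3.2*}) carries the exponent $2$ and not $s$ --- and to match the form of (\ref{3.2*}) one uses the elementary splitting $\int_{\Bb{2}}|\bfvb|^2\le\int_{\Bb{1}}|\bfvb|^2+C\bigl(\int_{\Mb{1}{2}}|\bfvb|^s\bigr)^{2/s}$, whence $\bigl(\int_{\Bb{2}}|\bfvb|^2\bigr)^{s/2}\le C\bigl(\int_{\Bb{1}}|\bfvb|^2\bigr)^{s/2}+C\int_{\Mb{1}{\varrho}}|\bfvb|^s$.

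It remains to treat the harmonic remainder $\pb_3$ on $\Mb{3}{4}$. For $|\bfxb|\le4$ and $\gamma\varrho<R<\varrho$ the Poisson integral formula on the sphere $\partial\Bb{R}$ gives $|\pb_3(\bfxb)|\le C(\gamma)R^{-2}\int_{\partial\Bb{R}}|\pb_3|\;\rmd S$, because $\overline{\Mb{3}{4}}\subset\Bb{\gamma\varrho}$ with $\varrho$ large keeps $\bfxb$ away from $\partial\Bb{R}$, so the Poisson kernel is $\le C(\gamma)R^{-2}$ there. Averaging over $R\in(\gamma\varrho,\varrho)$ and passing to polar coordinates yields, for $\bfxb\in\Mb{3}{4}$,
\begin{equation*}
|\pb_3(\bfxb)|\ \le\ \frac{C(\gamma)}{\varrho^{3}}\int_{\Mb{\gamma\varrho}{\varrho}}|\pb_3|\;\rmd\bfxb\ \le\ \frac{C(\gamma)}{\varrho^{3}}\int_{\Mb{\gamma\varrho}{\varrho}}\bigl(|\pb|+|\pb_1|+|\pb_2|\bigr)\;\rmd\bfxb .
\end{equation*}
Inserting the outer--annulus bounds for $\pb_1$ and $\pb_2$, the $|\pb|$--term is bounded by $\varrho^{-3}\int_{\Mb{\gamma\varrho}{\varrho}}[|\bfvb|^2+|\pb|]$, and the $\pb_1$-- and $\pb_2$--terms by the quantities estimated above. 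Finally, raising $|\pb_3|$ to the power $s/2$ (with the fixed constant $|\Mb{3}{4}|$ absorbed), and combining with the bounds for $\int_{\Mb{3}{4}}|\pb_1|^{s/2}$ and $\int_{\Mb{3}{4}}|\pb_2|^{s/2}$, one obtains (\ref{3.2*}) with constants $\cc31,\cc32,\cc33$ depending only on $s$, $a$, $\gamma$.

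The step I expect to be the main obstacle is the harmonic part: its only a priori control is through its $L^1$-average over the far annulus, and the estimate works only because the power counting is exact --- the factor $\varrho^{-3}$ gained from the Poisson--kernel averaging must precisely cancel the factor $\varrho^{3(1-2/s)}$ produced by H\"older's inequality when the global $L^{s/2}$-bound for $\pb_1$ is converted into an $L^1$-bound on an annulus of volume $\sim\varrho^{3}$, the surviving negative power of $\varrho$ being harmless. Two secondary points need care: the near--origin ball, where $\bfvb$ is controlled only in $L^2$, must be peeled off through the smooth--kernel estimate for $\pb_2$ (this is what forces the exponent $2$ in the first term of (\ref{3.2*})); and one must verify that the two drift terms in (\ref{2.9*}) genuinely drop out under $\divb$, so that $\pb$ satisfies the same pressure Poisson equation as in the untransformed Navier--Stokes system.
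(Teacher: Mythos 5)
Your proof is correct, and in its first two components it coincides with the paper's: both arguments start from the pressure Poisson equation $\Deltab\pb=-\partialb_i\partialb_j(\vb_i\vb_j)$ (your verification that the two extra drift terms in (\ref{2.9*}) are divergence--free is exactly what the paper tacitly uses), both isolate a near--origin piece sourced in $\Bb{2}$ that is handled by the smoothness of the kernel on $\Mb{3}{4}$ and the splitting $\int_{\Bb{2}}|\bfvb|^2\leq\int_{\Bb{1}}|\bfvb|^2+C\br\bigl(\int_{\Mb{1}{2}}|\bfvb|^s\bigr)^{2/s}$ --- your $\pb_2$ is, almost verbatim, the paper's $\pb_1$ and its estimate (\ref{3.4*}) --- and both isolate a Calder\'on--Zygmund piece sourced on $\{1<|\bfxb|<\rme^{a\tb/2}\}$ (your $\pb_1$, the paper's $\pb_2$ and (\ref{3.5*})). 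Where you genuinely diverge is the remaining piece, which is the crux. The paper never forms a harmonic remainder: it represents $\eta\br\pb$ (with $\eta$ a cutoff degenerating on the outer annulus $\{\gamma\br\rme^{a\tb/2}<|\bfxb|<\rme^{a\tb/2}\}$) as the Newtonian potential of $\Deltab(\eta\br\pb)$ and collects the four commutator integrals in which derivatives fall on $\eta$; these are automatically supported in the outer annulus, involve both $|\bfvb|^2$ and $|\pb|$ there, and the product of the kernel decay with the bounds $|\nablab\eta|\leq C\rme^{-a\tb/2}$, $|\nablab^2\eta|\leq C\rme^{-a\tb}$ yields the factor $\rme^{-3a\tb/2}$ directly. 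You instead declare $\pb_3:=\pb-\pb_1-\pb_2$ harmonic in the large ball and control it on $\Mb{3}{4}$ by the Poisson kernel averaged over spheres of radius $R\in(\gamma\rme^{a\tb/2},\rme^{a\tb/2})$, arriving at the same quantity $\rme^{-3a\tb/2}\int_{\rm annulus}|\pb_3|$ and then reabsorbing the annulus integrals of $|\pb_1|$, $|\pb_2|$ into the first two terms; your power counting for these reabsorptions is exact, and your version of the third term is in fact marginally sharper (only $|\pb|$ is needed in it). Your route is the more modular ``local pressure decomposition'' and makes transparent why a term involving $|\pb|$ itself is unavoidable --- it controls the harmonic part, which the velocity cannot see --- at the cost of invoking Weyl's lemma and the Poisson representation; the paper's route keeps everything as explicit convergent integrals. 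One caveat you share with the paper: both arguments bound kernels using $|\bfxb-\bfyb|\geq\gamma\br\rme^{a\tb/2}-4$, so the constants degenerate as $\tb$ approaches the threshold $2a^{-1}\ln(4/\gamma)$; this is harmless because the lemma is only applied for $\tb\geq\tb_{\delta}\to\infty$, but strictly uniform constants on the stated range would require restricting to, say, $\tb>2a^{-1}\ln(8/\gamma)$.
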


\begin{proof}
Let $\eta$ be an infinitely differentiable cut--off function in
$\R^3$ such that
\begin{displaymath}
\eta(\bfxb)\ \ \left\{ \begin{array}{ll} =1 & \mbox{for}\
|\bfxb|\leq\gamma\, \rme^{\frac{1}{2}a\tb}, \\ [4pt] \in[0,1] &
\mbox{for}\
\gamma\, \rme^{\frac{1}{2}a\tb}<|\bfxb|\leq \rme^{\frac{1}{2}a\tb}, \\
[4pt] =0 & \mbox{for}\ \rme^{\frac{1}{2}a\tb}<|\bfxb|
\end{array} \right.
\end{displaymath}
and \hspace{60pt} $\displaystyle |\nablab\eta|\ \leq\
\frac{2}{1-\gamma}\, \rme^{-\frac{1}{2}a\tb}$ \qquad and \qquad
$\displaystyle |{\nablab}^2\eta|\ \leq\ \frac{8}{(1-\gamma)^2}\,
\rme^{-a\tb}$.

\vspace{6pt} \noindent
Function $\eta$ can be further expressed in the form
$\eta_1+\eta_2$, where both the functions $\eta_1$ and $\eta_2$
are from $C_0^{\infty}(\R^3)$, with values in $[0,1]$, and such
that $\eta_1=1$ on $\Bb{1}$ and $\eta_1=0$ on $\R^3\smallsetminus
\Bb{2}$. Thus, function $\eta_1$ is supported in the closure of
$\Bb{2}$ and $\eta_2$ is supported in $\R^3\smallsetminus\Bb{1}$.

The function $\eta\br\pb$ satisfies the obvious identity
\begin{displaymath}
\eta(\bfxb)\, \pb(\bfxb,\tb)\ =\
-\frac{1}{4\pi}\int_{\R^3}\frac{1}{|\bfxb-\bfyb|}\
\bigl[\Deltab(\eta\br\pb)\bigr](\bfyb,\tb)\; \rmd\bfyb
\end{displaymath}
for $\bfxb\in\R^3$. Integrating by parts and using the formula
$\Deltab\pb=-\partialb_i\partialb_j(\vb_i\vb_j)$ (which we obtain
if we apply operator ${\rm div}'$ to equation (\ref{2.9*})), we
derive that
\begin{equation}
\eta(\bfxb)\, \pb(\bfxb,\tb)\ =\ \pb_1(\bfxb,\tb)+\pb_2(\bfxb,\tb)
+\pb_3(\bfxb,\tb), \label{3.3*}
\end{equation}
where
\vspace{-8pt}
\begin{eqnarray*}
\pb_1(\bfxb,\tb) &=& -\frac{1}{4\pi} \int_{\Bb{2}}
\frac{\partial^2}{\partial\yb_i\,
\partial\yb_j}\Bigl( \frac{1}{|\bfxb-\bfyb|} \Bigr)\,
[\br\eta_1\br\vb_i\vb_j](\bfyb,\tb)\; \rmd\bfyb, \\ [2pt]
\pb_2(\bfxb,\tb) &=& -\frac{1}{4\pi}
\int_{\Mbuu{1}{$\rme^{a\tb\hspace{-1.5pt}/2}$}}\frac{\partial^2}
{\partial\yb_i\, \partial\yb_j}\Bigl( \frac{1}{|\bfxb-\bfyb|}
\Bigr)\, [\br\eta_2\br\vb_i\vb_j](\bfyb,\tb)\; \rmd\bfyb, \\
[2pt]
\pb_3(\bfxb,\tb) &=& \frac{1}{2\pi}
\int_{\Mbuu{$\gamma\br\rme^{a\tb\hspace{-1.5pt}/2}$}{$\rme^{a\tb
\hspace{-1.5pt}/2}$}} \frac{\xb_i-\yb_i}{|\bfxb-\bfyb|^3}\, \Bigl(
\frac{\partial\eta}{\partial\yb_j}\,
\vb_i\br\vb_j\Bigr)(\bfyb,\tb)\; \rmd\bfyb \\ [2pt]
&& +\,
\frac{1}{4\pi}\int_{\Mbuu{$\gamma\br\rme^{a\tb\hspace{-1.5pt}/2}$}
{$\rme^{a\tb\hspace{-1.5pt}/2}$}}\frac{1}{|\bfxb-\bfyb|}\, \Bigl(
\frac{\partial^2\eta}{\partial\yb_i\, \partial\yb_j}\, \vb_i\br
\vb_j\Bigr)(\bfyb,\tb)\; \rmd\bfyb \\ [2pt]
&& +\,
\frac{1}{4\pi}\int_{\Mbuu{$\gamma\br\rme^{a\tb\hspace{-1.5pt}/2}$}
{$\rme^{a\tb\hspace{-1.5pt}/2}$}}\frac{\xb_i-\yb_i}{|\bfxb-\bfyb|^3}\,
\Bigl( \frac{\partial\eta}{\partial\yb_i}\, \pb\Bigr)(\bfyb,\tb)\;
\rmd\bfyb \\ [2pt]
&& +\,
\frac{1}{4\pi}\int_{\Mbuu{$\gamma\br\rme^{a\tb\hspace{-1.5pt}/2}$}
{$\rme^{a\tb\hspace{-1.5pt}/2}$}}\frac{1}{|\bfxb-\bfyb|}\,
[\br\Deltab\eta\, \pb\br](\bfyb,\tb)\; \rmd\bfyb.
\end{eqnarray*}
If $\bfxb\in\Mb{3}{4}$ then
\begin{align}
|\pb_1(\bfxb,\tb)|\ &\leq\ C\int_{\Bb{2}}|\bfvb|^2\; \rmd\bfyb\
\leq\ C\int_{\Bb{1}}|\bfvb|^2\; \rmd\bfyb+C\, \biggl(
\int_{\Mb{1}{2}}|\bfvb|^s\; \rmd\bfyb\biggr)^{\! \frac{2}{s}},
\label{3.4*} \\
|\pb_3(\bfxb,\tb)|\ &\leq\ C\ \rme^{-\frac{3}{2}a\tb}
\int_{\Mbuu{$\gamma\br\rme^{a\tb\hspace{-1.5pt}/2}$}{$\rme^{a\tb
\hspace{-1.5pt}/2}$}}\bigl[\br|\bfvb|^2+|\pb|\br\bigr]\; \rmd\bfxb.
\label{3.6*}
\end{align}
Furthermore, applying the Calderon--Zygmund theorem, we obtain
\begin{equation}
\int_{\Mbu{1\hspace{2.7pt}}{$\rme^{a\tb\hspace{-1.5pt}/2}$}}|
\pb_2(\bfyb,\tb)|^\frac{s}{2}\; \rmd\bfyb\ \leq\
C\int_{\Mbu{1\hspace{2.7pt}}{$\rme^{a\tb\hspace{-1.5pt}/2}$}}
|\bfvb(\bfyb,\tb)|^s\; \rmd\bfyb. \label{3.5*}
\end{equation}
Inequalities (\ref{3.4*})--(\ref{3.5*}) imply (\ref{3.2*}).
\end{proof}

\medskip
Using Lemma \ref{L2}, we can now estimate $\cP(\delta)$ as
follows:
\begin{align}
\cP(\delta)\ &\leq\ \cc31\int_{\tb_{\delta}}^{\infty}
\biggl(\int_{\Bb{1}}|\bfvb|^2\, \rmd\bfxb\biggr)^{\!
\frac{r}{r-1}}\, \rme^{-\frac{2}{3}\, \frac{r}{r-1}\,
a(\tb-\tb_{\delta})}\; \rmd\tb \nonumber \\ \noalign{\vskip 2pt}
& \hspace{17pt} +\cc32\int_{\tb_{\delta}}^{\infty}
\biggl(\int_{\Mbu{1\hspace{2.7pt}}{$\rme^{a\tb\hspace{-1.5pt}/2}$}}
|\bfvb|^s\, \rmd\bfxb\biggr)^{\! \frac{2}{s}\, \frac{r}{r-1}}\,
\rme^{-\frac{2}{3}\,
\frac{r}{r-1}\, a(\tb- \tb_{\delta})}\; \rmd\tb \nonumber \\
\noalign{\vskip 0pt}
& \hspace{17pt} +\cc33\int_{\tb_{\delta}}^{\infty}
\biggl(\rme^{-\frac{3}{2}a\tb}\int_{\Mbuu{$\gamma\br
\rme^{a\tb\hspace{-1.5pt}/2}$}{$\rme^{a\tb\hspace{-1.5pt}/2}$}}
\bigl[\br|\bfvb|^2+|\pb|\br\bigr]\; \rmd\bfxb\biggr)^{\!
\frac{r}{r-1}}\, \rme^{-\frac{2}{3}\, \frac{r}{r-1}\,
a(\tb-\tb_{\delta})}\; \rmd\tb. \label{2.43}
\end{align}
Applying inequality (\ref{2.17*}), we deduce that the first term
on the right hand side of (\ref{2.43}) is
\begin{align}
& \leq\ \cc31\, \biggl[
{\displaystyle\mathrel{\mathop{\esssup}_{\tb>\tb_{\delta}}}}\
\int_{\Bb{1}}|\bfvb|^2\, \rmd\bfx\ \rme^{-\frac{2}{3}\br
a(\tb-\tb_{\delta})}\biggr]^{\frac{r}{r-1}-1}
\int_{\tb_{\delta}}^{\infty} \int_{\Bb{1}} |\bfvb|^2\, \rmd\bfxb\;
\rme^{-\frac{2}{3}\br a(\tb-\tb_{\delta})}\; \rmd\tb \nonumber \\
\noalign{\vskip 1pt}
& \leq\ \frac{6\cc31}{a}\ \left[ \|\varphi\br\bfv'(\, .\,
,\tb_{\delta})\|_{2;\, \Bb{4}}^2+K^I(\delta)+
K^{I\hspace{-0.3pt}I}(\delta) \right]^{\frac{r}{r-1}}.
\label{2.46}
\end{align}
The second term on the right hand hand side of (\ref{2.43}) equals
\phantom{$\cn51\cn96$}
\begin{align}
& =\ C\ \delta^{-\frac{4}{3}\, \frac{r}{r-1}}
\int_{t_0-\delta^2}^{t_0}\biggl(
\int_{\theta(t)<|\bfx-\bfx_0|<\sqrt{a}\br\rho}|\bfv|^s\;
\rmd\bfx\biggr)^{\! \frac{2}{s}\, \frac{r}{r-1}}\;
\theta^{\frac{2r}{r-1}-\frac{6}{s}\,
\frac{r}{r-1}-2+\frac{4}{3}\, \frac{r}{r-1}} (t)\; \rmd\br t \nonumber \\
\noalign{\vskip 6pt}
& =\ C\ \delta^{-\frac{4}{3}\, \frac{r}{r-1}}
\int_{t_0-\delta^2}^{t_0}\biggl(
\int_{\theta(t)<|\bfx-\bfx_0|<\sqrt{a}\br\rho}|\bfv|^s\;
\rmd\bfx\biggr)^{\! \frac{2}{s}\, \frac{r}{r-1}}\;
\theta^{\frac{r}{r-1}\br\left[\frac{2}{r}-\frac{6}{s}+
\frac{4}{3}\right]}(t)\; \rmd\br t \nonumber \\
\noalign{\vskip 6pt}
& \leq\ C\ \delta^{-\frac{4}{3}\, \frac{r}{r-1}}
\biggl[\int_{t_0-\delta^2}^{t_0}\biggl(
\int_{\theta(t)<|\bfx-\bfx_0|<\sqrt{a}\br\rho}|\bfv|^s\;
\rmd\bfx\biggr)^{\! \frac{r}{s}}\, \rmd\br
t\biggr]^{\frac{2}{r-1}}\, \biggl[\int_{t_0-\delta^2}^{t_0}
\theta^{\frac{r}{r-3}\br\left[\frac{2}{r}-\frac{6}{s}+
\frac{4}{3}\right]}
(t)\; \rmd\br t\biggr]^{\frac{r-3}{r-1}} \nonumber \\
\noalign{\vskip 6pt}
& =\ \cc51(\delta)\ \cc96(\delta), \label{2.19*}
\end{align}
where $C=C(a,\rho)$ and
\vspace{-5pt}
\begin{align*}
\cc51(\delta)\ &:=\ \biggl[\int_{t_0-\delta^2}^{t_0}\biggl(
\int_{\theta(t)<|\bfx-\bfx_0|<\sqrt{a}\br\rho}|\bfv|^s\;
\rmd\bfx\biggr)^{\! \frac{r}{s}}\, \rmd\br
t\biggr]^{\frac{2}{r-1}}\ \longrightarrow 0 \quad \mbox{for}\
\delta\to 0+, \\\noalign{\vskip 2pt}
\cc96(\delta)\ &:=\ C\ \delta^{-\frac{4}{3}\, \frac{r}{r-1}}\
\biggl[\int_{t_0-\delta^2}^{t_0}
\theta^{\frac{r}{r-3}\br\left[\frac{2}{r}-\frac{6}{s}+
\frac{4}{3}\right]} (t)\; \rmd\br t\biggr]^{\frac{r-3}{r-1}}\ =\
C\ \delta^{\frac{2r}{r-1}\,
\left[1-\frac{2}{r}-\frac{3}{s}\right]}\ =\ C.
\end{align*}
Thus, $\cc51(\delta)\ \cc96(\delta)\to 0$ as $\delta\to 0+$. Hence
the second term on the right hand hand side of (\ref{2.43}) tends
to zero as $\delta\to 0+$.

Due to the well known result from \cite{CKN}, saying that the set
of singular points of a suitable weak solution has the
$1$--dimensional Hausdorff measure equal to zero, we can assume
(without loss of generality) that $\rho$ and $\gamma$ are such
positive numbers that $\bfv$ and $p$ are bounded on the set
$\{(\bfx,t)\in\R^4;\
\sqrt{a}\br\gamma\rho<|\bfx-\bfx_0|<\sqrt{a}\br\rho$ and
$t_0-\rho^2<t<t_0\}$. Then the third term on the right hand side
of (\ref{3.2*}) is \phantom{$\cn52$}
\begin{align}
& \leq\ \cc33\int_{\tb_{\delta}}^{\infty}
\biggl(\rme^{-\frac{3}{2}a\tb}\int_{\Mbuu{$\gamma\br\rme^{a\tb
\hspace{-1.5pt}/2}$}{$\rme^{a\tb\hspace{-1.5pt}/2}$}}
\bigl[\br|\bfvb|^2+|\pb|\br\bigr]\;
\rmd\bfxb\biggr)^{\! \frac{r}{r-1}}\; \rmd\tb \nonumber \\
& =\ C\int_{t_0-\delta^2}^{t_0}\biggl(
\int_{\sqrt{a}\br\gamma\rho<|\bfx-\bfx_0|<\sqrt{a}\br\rho}
\bigl[\br|\bfv|^2+|p|\br\bigr]\; \rmd\bfx\biggr)^{\!
\frac{r}{r-1}}\, \theta^{\br 1-\frac{r}{r-1}}(t)\; \rmd\br t \nonumber \\
\noalign{\vskip 4pt}
& \leq\ C\int_{t_0-\delta^2}^{t_0}\theta^{1-\frac{r}{r-1}}(t)\;
\rmd\br t\ =:\ \cc52(\delta)\ \longrightarrow\ 0 \quad \mbox{as}\
\delta\to 0+. \label{2.45}
\end{align}

Using now (\ref{3.1*}) and estimating $\cP(\delta)$ by means of
(\ref{2.43})--(\ref{2.45}), we obtain
\begin{align*}
K^{I\hspace{-0.3pt}I}(\delta)\ & \leq\
\cc21(\delta)\ {\cP}^{\frac{r-1}{r}}(\delta) \\
&\leq \cc21(\delta)\ \Bigl\{
\Bigl(\frac{6\cc31}{a}\Bigr)^{\frac{r-1}{r}}\, \Bigl[ \|\varphi\br
\bfv'(\, .\, ,\tb_{\delta})\|_{2;\, \Bb{4}}^2+K^I(\delta)+
K^{I\hspace{-0.3pt}I}(\delta) \Bigr] \\
& \hspace{20pt} +\cc51^{\frac{r-1}{r}}(\delta)\,
\cc96^{\frac{r-1}{r}}(\delta)+\cc52^{\frac{r-1}{r}}(\delta)\Bigr\}.
\end{align*}
Assuming that $\delta$ is sufficiently small, the term
$\cc21(\delta)\, (6\cc31/a)^{\frac{r-1}{r}}\,
K^{I\hspace{-0.3pt}I}(\delta)$ on the right hand side can be
absorbed by the left hand side and we get the estimate
\phantom{$\cn55$}
\begin{align*}
K^{I\hspace{-0.3pt}I}(\delta)\ &\leq\ \cc55(\delta)\ \Bigl\{
\Bigl(\frac{6\cc31}{a}\Bigr)^{\frac{r-1}{r}}\, \Bigl[ \|\varphi\br
\bfv'(\, .\, ,\tb_{\delta})\|_{2;\, \Bb{4}}^2+K^I(\delta) \Bigr]
+\cc51^{\frac{r-1}{r}}(\delta)\, \cc96^{\frac{r-1}{r}}(\delta)+
\cc52^{\frac{r-1}{r}}(\delta) \Bigr\}\br,
\end{align*}
where
\vspace{-10pt}
\begin{displaymath}
\cc55(\delta)\ :=\ \frac{\cc21(\delta)}{1-\cc21(\delta)\,
(6\cc31/a)^{\frac{r-1}{r}}}\ \longrightarrow\ 0 \quad \mbox{for}\
\delta\to 0+.
\end{displaymath}

Thus, finally, inequality (\ref{2.17*}) yields
\phantom{$\cn56\cn57$}
\begin{align}
\|\varphi\br & \bfvb(\, .\, ,\tb)\|_{2;\, \Bb{4}}^2\,
\rme^{-\frac{2}{3}a(\tb-\tb_{\delta})}+\frac{a}{6}
\int_{\tb_{\delta}}^{\tb}\|\varphi\br\bfvb(\, .\, ,\tau)\|_{2;\,
\Bb{4}}^2\, \rme^{-\frac{2}{3}a(\tau-\tb_{\delta})}\;
\rmd\tau \nonumber \\
& \hspace{17pt} +
2\nu\int_{\tb_{\delta}}^{\tb}\|\nablab(\varphi\bfvb(\, .\,
,\tau))\|_{2;\, \Bb{4}}^2\, \rme^{-\frac{2}{3}a(\tau-
\tb_{\delta})}\; \rmd\tau \nonumber \\ \noalign{\vskip 4pt}
& \leq\ \cc56(\delta)\ \|\varphi\br\bfvb(\, .\,
,\tb_{\delta})\|_{2;\, \Bb{4}}^2+\cc57(\delta),
\label{2.46a}
\end{align}
where
\vspace{-10pt}
\begin{align}
\cc56(\delta)\ &:=\ \Bigl[ 1+\cc55(\delta)\,
\Bigl(\frac{6\cc31}{a}\Bigr)^{\frac{r-1}{r}}\Bigr]\
\longrightarrow\ 1 \quad \mbox{for}\ \delta\to 0+, \label{2.47} \\
\noalign{\vskip 2pt}
\cc57(\delta)\ &:=\ \Bigl[ 1+\cc55(\delta)\,
\Bigl(\frac{6\cc31}{a}\Bigr)^{\frac{r-1}{r}}\Bigr]\ K^I(\delta)+
\cc55(\delta)\, \bigl[ \cc51^{\frac{r-1}{r}}(\delta)\,
\cc96^{\frac{r-1}{r}}(\delta)+ \cc52^{\frac{r-1}{r}}(\delta)\bigr]
\nonumber \\ \noalign{\vskip 4pt}
& \longrightarrow\ 0 \quad \mbox{for}\ \delta\to 0+.
\label{2.48}
\end{align}

In order to control the first term on the right hand side of
(\ref{2.46a}), we shall use the next lemma.

\begin{lemma}
\label{L1}
Let function $f$ be integrable and nonnegative on the interval
$(0,\infty)$. Let $\zeta>0$. Then at least one of the two
statements holds:
\begin{alignat}{2}
\hspace{-7pt}  {\rm (A)} \hspace{29.4mm} &
\int_{\sigma}^{\infty}f(\tau)\;
\rme^{-\frac{2}{3}a(\tau-\sigma)}\; \rmd\tau\ \longrightarrow\ 0
\quad \mbox{for}\ \sigma\to\infty, \hspace{29mm} \label{0.2a}
\end{alignat}

\begin{list}{}
{\setlength{\topsep 4pt}
\setlength{\itemsep 6pt}
\setlength{\leftmargin 25pt}
\setlength{\rightmargin 0pt}
\setlength{\labelwidth 19pt}}

\item[{\rm (B)} ]
there exists a set $\cE'_{\zeta}\subset(0,\infty)$ such that
$m_1\bigl(\cE'_{\zeta}\cap(\sigma,\infty)\bigr)>0$ for each
$\sigma>0$ (where $m_1$ denotes the $1$-dimensional Lebesgue
measure) and
\begin{equation}
f(\sigma)\ \leq\ \frac{2a(1+\zeta)}{3}
\int_{\sigma}^{\infty}f(\tau)\; \rme^{-\frac{2}{3}a(\tau-
\sigma)}\; \rmd\tau \qquad \mbox{for}\ \sigma\in\cE'_{\zeta}.
\label{0.2}
\end{equation}
\end{list}
\end{lemma}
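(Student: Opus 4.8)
The plan is to argue by contradiction: suppose statement (B) fails, and show that (A) must then hold. To make this precise, set
\[
F(\sigma)\ :=\ \int_{\sigma}^{\infty}f(\tau)\; \rme^{-\frac{2}{3}a(\tau-\sigma)}\; \rmd\tau ,
\]
which is well defined and finite for a.a.~$\sigma$ since $f\in L^1(0,\infty)$ (indeed $F\in L^1(0,\infty)$ by Fubini, because $\int_0^\infty\rme^{-\frac23 a(\tau-\sigma)}\,\rmd\sigma$ over $\sigma<\tau$ equals $\frac{3}{2a}$). The negation of (B) is that for \emph{every} set $\cE$ of the type described — i.e.\ every set meeting every tail $(\sigma_0,\infty)$ in positive measure — inequality (\ref{0.2}) fails somewhere on $\cE$; equivalently, the set
\[
\cE'_{\zeta}\ :=\ \Bigl\{\sigma>0:\ f(\sigma)\ \leq\ \tfrac{2a(1+\zeta)}{3}\,F(\sigma)\Bigr\}
\]
has the property that $m_1\bigl(\cE'_{\zeta}\cap(\sigma_0,\infty)\bigr)=0$ for \emph{some} $\sigma_0>0$. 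In other words, there is a threshold $\sigma_0$ beyond which, for a.a.~$\sigma>\sigma_0$,
\[
f(\sigma)\ >\ \tfrac{2a(1+\zeta)}{3}\,F(\sigma).
\]

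The second step is to feed this pointwise lower bound into a differential inequality for $F$. Observe that $F$ satisfies $F'(\sigma)=\tfrac{2a}{3}F(\sigma)-f(\sigma)$ (differentiating under the integral sign / using the defining integral equation), so on $(\sigma_0,\infty)$ we get, for a.a.~$\sigma$,
\[
F'(\sigma)\ =\ \tfrac{2a}{3}F(\sigma)-f(\sigma)\ <\ \tfrac{2a}{3}F(\sigma)-\tfrac{2a(1+\zeta)}{3}F(\sigma)\ =\ -\tfrac{2a\zeta}{3}\,F(\sigma).
\]
Since $F\geq 0$, Gronwall's inequality gives $F(\sigma)\leq F(\sigma_0)\,\rme^{-\frac{2a\zeta}{3}(\sigma-\sigma_0)}\to 0$ as $\sigma\to\infty$, which is exactly statement (A). (One should note $F(\sigma_0)<\infty$ for a suitable choice of $\sigma_0$, available since $F\in L^1$.) Thus either (B) holds or (A) holds, proving the dichotomy.

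The main technical point to handle carefully is the regularity of $F$ and the validity of the identity $F'=\tfrac{2a}{3}F-f$: $f$ is merely $L^1$, so $F$ is only absolutely continuous and the differential inequality holds a.e., which is enough for the Gronwall estimate. A cleaner route that sidesteps differentiating $f$ is to write $F(\sigma)=\rme^{\frac{2a}{3}\sigma}\int_\sigma^\infty f(\tau)\rme^{-\frac{2a}{3}\tau}\,\rmd\tau$ and note $g(\sigma):=\int_\sigma^\infty f(\tau)\rme^{-\frac{2a}{3}\tau}\,\rmd\tau$ is nonincreasing, absolutely continuous, with $g'(\sigma)=-f(\sigma)\rme^{-\frac{2a}{3}\sigma}$ a.e.; the assumed bound $f(\sigma)>\tfrac{2a(1+\zeta)}{3}\rme^{\frac{2a}{3}\sigma}g(\sigma)$ becomes $g'(\sigma)<-\tfrac{2a(1+\zeta)}{3}g(\sigma)$, whence $g(\sigma)\leq g(\sigma_0)\rme^{-\frac{2a(1+\zeta)}{3}(\sigma-\sigma_0)}$ and $F(\sigma)=\rme^{\frac{2a}{3}\sigma}g(\sigma)\leq g(\sigma_0)\rme^{\frac{2a}{3}\sigma_0}\rme^{-\frac{2a\zeta}{3}(\sigma-\sigma_0)}\to 0$. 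Either way, the heart of the argument is the elementary observation that pointwise domination of $f$ by its own exponentially-weighted tail integral forces that tail integral to decay; the rest is bookkeeping about null sets and absolute continuity.
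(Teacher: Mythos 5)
Your proof is correct and follows essentially the same route as the paper: negate (B) via the maximal set on which (\ref{0.2}) holds, convert the resulting pointwise reverse inequality into a differential inequality for the exponentially weighted tail integral, and integrate it to obtain exponential decay, i.e.\ (A). Your ``cleaner route'' with $g(\sigma)=\int_\sigma^\infty f(\tau)\rme^{-\frac{2a}{3}\tau}\,\rmd\tau$ is in fact precisely the paper's argument (the paper calls this function $h$ and observes that (\ref{0.2}) is equivalent to $[\rme^{\frac{2}{3}a(1+\zeta)\sigma}h(\sigma)]'\geq 0$).
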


\begin{proof}
Denote $h(\sigma):=\int_{\sigma}^{\infty}f(\tau)\;
\rme^{-\frac{2}{3}a\tau}\; \rmd\tau$. Then
$h'(\sigma)=-f(\sigma)\, \rme^{-\frac{2}{3}a\sigma}$ for
a.a.~$\sigma\in(0,\infty)$. Inequality (\ref{0.2}) is equivalent
to $h'(\sigma)+\frac{2}{3}\br a(1+\zeta)\, h(\sigma)\geq 0$, which
is further equivalent to $[\rme^{\frac{2}{3}a(1+\zeta)\sigma}\,
h(\sigma)]'\geq 0$.

Assume that statement (B) does not hold. Then there exists
$\sigma_0>0$ such that $[\rme^{\frac{2}{3}a(1+\zeta)\sigma}\,
h(\sigma)]' \\ <0$ for a.a.~$\sigma\in(\sigma_0,\infty)$. Hence
$\rme^{\frac{2}{3}a(1+\zeta)\sigma}\,
h(\sigma)<\rme^{\frac{2}{3}a(1+\zeta)\sigma_0}\, h(\sigma_0)$,
i.e.
\begin{displaymath}
\rme^{\frac{2}{3}a\sigma}\, h(\sigma)\ <\
\rme^{-\frac{2}{3}a\zeta\sigma}\,
\rme^{\frac{2}{3}a(1+\zeta)\sigma_0}\, h(\sigma_0)
\end{displaymath}
for a.a.~$\sigma\in(\sigma_0,\infty)$. Thus, statement (A) holds.
The proof of Lemma \ref{L1} is completed.
\end{proof}

\vspace{5pt}
If we apply Lemma \ref{L1} with $\sigma=\tb_{\delta}$ and
$f(\tb_{\delta})=\|\varphi\bfvb(\, .\, ,\tb_{\delta})\|_{2;\,
\Bb{4}}^2$, we obtain that either
\begin{equation}
\int_{\tb_{\delta}}^{\infty} \|\varphi\bfvb(\, .\, ,\tau)\|_{2;\,
\Bb{4}}^2\; \rme^{-\frac{2}{3}a(\tau-\tb_{\delta})}\; \rmd\tau\
\longrightarrow\ 0 \quad \mbox{for}\ \tb_{\delta}\to\infty,
\label{0.4}
\end{equation}
or there exists a set $\cE'_{\zeta}\subset(0,\infty)$ with the
properties named in item (B) of Lemma \ref{L1} such that
\begin{equation}
\|\varphi\bfvb(\, .\, ,\tb_{\delta})\|_{2;\, \Bb{4}}^2\ \leq\
\frac{2a(1+\zeta)}{3}\int_{\tb_{\delta}}^{\infty}
\|\varphi\bfvb(\, .\, ,\tau)\|_{2;\, \Bb{4}}^2\;
\rme^{-\frac{2}{3}a(\tau-\tb_{\delta})}\; \rmd\tau \qquad
\mbox{for}\ \tb_{\delta}\in\cE'_{\zeta}. \label{0.3}
\end{equation}


\section{Completion of the proof of Theorem \ref{T1} in the case of
(\ref{0.4})} \label{S4}

In this section, we assume that (\ref{0.4}) holds. Then there
exists a set $\cG'\subset(0,\infty)$ such that
$m_1(\cG'\cap(\sigma,\infty))>0$ for each $\sigma>0$ and
\begin{equation}
\|\varphi\bfvb(\, .\, ,\tb_{\delta})\|_{2;\, \Bb{4}}\
\longrightarrow\ 0 \qquad \mbox{for}\ \tb_{\delta}\in\cG,\
\tb_{\delta}\to\infty. \label{4.1}
\end{equation}
(This can be easily proven by contradiction.) Denote by $\cG$ the
set of $\delta>0$, corresponding to $\tb_{\delta}\in\cG'$, where
$\delta$ and $\tb_{\delta}$ are related by formula (\ref{2.6a*}).
Inequality (\ref{2.46a}) yields \phantom{$\cn68$}
\begin{gather}
\|\varphi\br\bfvb(\, .\, ,\tb)\|_{2;\, \Bb{4}}^2\,
\rme^{-\frac{2}{3}a(\tb-\tb_{\delta})}+\frac{a}{6}
\int_{\tb_{\delta}}^{\tb}\|\varphi\br\bfvb(\, .\, ,\tau)\|_{2;\,
\Bb{4}}^2\, \rme^{-\frac{2}{3}a(\tau-\tb_{\delta})}\;
\rmd\tau \nonumber \\
+\, 2\nu\int_{\tb_{\delta}}^{\tb}\|\nablab(\varphi\br\bfvb(\, .\,
,\tau))\|_{2;\, \Bb{4}}^2\, \rme^{-\frac{2}{3}a(\tau-
\tb_{\delta})}\; \rmd\tau\ \leq\ \cc68(\delta) \label{4.2}
\end{gather}
for $\tb_{\delta}\in\cG'$ and $\tb>\tb_{\delta}$, where
$\cc68(\delta)\to 0$ as $\delta\to 0$, $\delta\in\cG$. Applying
now (\ref{4.2}), we can estimate the integrals on the right hand
side of (\ref{2.13*}) in the case when $\delta\in\cG$:
\begin{align}
\int_{\tb_{\delta}}^{\infty} & \|\nablab(\varphi\br\bfvb)\|_{2;\,
\Bb{4}}^2\, \rme^{-\frac{2}{3}a\br(\tb-\tb_{\delta})}\;
\rmd\br\tb\ \leq\ \frac{1}{2\nu}\, \cc68(\delta), \label{2.14} \\
\noalign{\vskip 6pt}
\int_{\tb_{\delta}}^{\infty} & \|\varphi\bfvb\|_{2;\, \Bb{4}}^6\,
\rme^{-2a\br(\tb-\tb_{\delta})}\; \rmd\br\tb \nonumber \\
\noalign{\vskip -4pt}
& \leq\ \Bigl[{\displaystyle
\mathrel{\mathop{\esssup}_{\tb>\tb_{\delta}}}}\
\bigl(\|\varphi\br\bfvb(\, .\, ,\tb)\|_{2;\, \Bb{4}}^4\,
\rme^{-\frac{4}{3}a(\tb-\tb_{\delta})}\, \bigr)\Bigr]\
\int_{\tb_n}^{\infty}\|\varphi\bfvb\|_{2;\, \Bb{4}}^2\,
\rme^{-\frac{2}{3}a\br(\tb-\tb_{\delta})}\; \rmd\br\tb \nonumber \\
\noalign{\vskip 2pt}
& \leq\ \frac{6}{a}\, \cc68^3(\delta). \label{2.33}
\end{align}
Since the right hand sides of (\ref{2.14}) and (\ref{2.33}) tend
to zero for $\delta\to 0$, $\delta\in\cG$, we obtain (by means of
estimate (\ref{2.13*})) that $G^{I\hspace{-0.3pt}I}(\delta)\to 0$
for the same $\delta$. This, together with (\ref{2.4*}), proves
(\ref{2.3*}). Hence $(\bfx_0,t_0)$ is a regular point of solution
$\bfv$.


\section{Completion of the proof of Theorem \ref{T1} in the case of
(\ref{0.3})} \label{S5}

In this section, we assume that (\ref{0.3}) holds.

\vspace{4pt} \noindent
{\bf A partition of function $\varphi$.} \ Let $\xi\in(0,1)$.
Function $\varphi$ can be expressed in the form $\varphi_1^{\xi}+
\varphi_2^{\xi}$, where both the functions $\varphi_1^{\xi}$ and
$\varphi_2^{\xi}$ are in $C_0^{\infty}(\R^3)$,
\begin{displaymath}
\varphi_1^{\xi}(\bfxb)\ \ \left\{ \begin{array}{ll} =1 &
\mbox{for}\
|\bfxb|\leq 1+\frac{1}{4}\br\xi, \\ [4pt] \in[0,1] & \mbox{for}\
1+\frac{1}{4}\br\xi<|\bfxb|\leq 1+\frac{3}{4}\br\xi, \\
[4pt] =0 & \mbox{for}\ 1+\frac{3}{4}\br\xi<|\bfxb|, \end{array}
\right. \qquad \varphi_2^{\xi}(\bfxb)\ :=\
\varphi(\bfxb)-\varphi^{\xi}(\bfxb).
\end{displaymath}
Function $\varphi_1^{\xi}$ can be chosen so that
$|\nabla'\varphi_1^{\xi}|\leq 4\, \xi^{-1}$.

\vspace{4pt} \noindent
{\bf A Friedrichs--type estimate of $\varphi_1^{\xi}\br\bfvb$ in
$\Bb{1+\xi}$.} \ Applying the so called Bogovskij operator in
$\Mb{1}{1+\xi}$, one can construct a function
$\bfw^{\xi}\in\bfW^{1,2}_0(\Mb{1}{1+\xi})$ such that ${\rm
div}'\br\bfw^{\xi}=-\nablab\varphi_1^{\xi}\cdot\bfvb$ and
$\|\nabla\bfw^{\xi}\|_{2;\, \Mb{1}{1+\xi}}\leq C\br\xi^{-1}\,
\|\bfvb\|_{2;\, \Mb{1}{1+\xi}}$. If we extend function
$\bfw^{\xi}$ by zero to the whole ball $\Bb{1+\xi}$ then
$\varphi_1^{\xi}\bfvb-\bfw^{\xi}$ is divergence--free and belongs
to $\bfW^{1,2}_0(\Bb{1+\xi})$. Now we have
\begin{equation}
\|\varphi_1^{\xi}\bfvb-\bfw^{\xi}\|_{2;\, \Bb{1+\xi}}\ \leq\
\frac{1+\xi}{\sqrt{\lambda_S(B_1)}}\
\bigl\|\nablab(\varphi_1^{\xi}\bfvb- \bfw^{\xi})\bigr\|_{2;\,
\Bb{1+\xi}}\br. \label{5.9*}
\end{equation}
(Recall the $\lambda_S(B_1)$ is the first eigenvalue of the
Dirichlet--Stokesian and $\pi^2$ is the principal eigenvalue of
the Dirichlet--Laplacian in the unit ball -- see Section
\ref{S1}.) Hence
\begin{align*}
\|\varphi_1^{\xi}\bfvb\|_{2;\, \Bb{1+\xi}}\ &\leq\
\|\varphi_1^{\xi}\bfvb-\bfw^{\xi}\|_{2;\, \Bb{1+\xi}}+
\|\bfw^{\xi}\|_{2;\, \Bb{1+\xi}} \nonumber \\ \noalign{\vskip 2pt}
&\leq\ \frac{1+\xi}{\sqrt{\lambda_S(B_1)}}\
\bigl\|\nablab(\varphi_1^{\xi}\bfvb- \bfw^{\xi})\bigr\|_{2;\,
\Bb{1+\xi}}+\frac{1+\xi}{\pi}\,
\|\nablab\bfw^{\xi}\|_{2;\, \Bb{1+\xi}} \nonumber \\
\noalign{\vskip 2pt}
&\leq\ \frac{1+\xi}{\sqrt{\lambda_S(B_1)}}\
\bigl\|\nablab(\varphi_1^{\xi}\bfvb) \bigr\|_{2;\,
\Bb{1+\xi}}+\frac{2\br(1+\xi)}{\pi}\
\|\nablab\bfw^{\xi}\|_{2;\, \Bb{1+\xi}} \nonumber \\
\noalign{\vskip 2pt}
&=\ \frac{1+\xi}{\sqrt{\lambda_S(B_1)}}\
\bigl\|\nablab(\varphi_1^{\xi}\bfvb) \bigr\|_{2;\,
\Bb{1+\xi}}+\frac{2\br(1+\xi)}{\pi}\
\|\nablab\bfw^{\xi}\|_{2;\, \Mb{1}{1+\xi}} \nonumber \\
\noalign{\vskip 2pt}
&\leq\ \frac{1+\xi}{\sqrt{\lambda_S(B_1)}}\
\bigl\|\nablab(\varphi_1^{\xi}\bfvb) \bigr\|_{2;\,
\Bb{1+\xi}}+\frac{C\br(1+\xi)}{\xi\pi}\ \|\bfvb\|_{2;\,
\Mb{1}{1+\xi}}\br.
\end{align*}
This implies that to each $\xi>0$ and $\kappa>0$ there exists
$\cn67(\kappa,\zeta)>0$ such that
\begin{equation}
\|\varphi_1^{\xi}\bfvb\|_{2;\, \Bb{1+\xi}}^2\ \leq\
\frac{(1+\kappa)\, (1+\xi)^2}{\lambda_S(B_1)}\
\bigl\|\nablab(\varphi_1^{\xi}\bfvb) \bigr\|_{2;\,
\Bb{1+\xi}}^2+\cc67(\kappa,\xi)\, \|\bfvb\|_{2;\,
\Mb{1}{1+\xi}}^2\br. \label{5.1*}
\end{equation}

\vspace{4pt} \noindent
{\bf Application of inequality (\ref{0.3}).} \ The integrand
$\|\varphi\br\bfvb\|_{2;\, \Bb{4}}^2$ on the right hand side of
(\ref{0.3}) can be expressed in the form
\begin{equation}
\|\varphi\br\bfvb\|_{2;\, \Bb{4}}^2\ =\
\|\varphi_1^{\xi}\br\bfvb\|_{2;\, \Bb{1+\xi}}^2+
2\br\bigl(\varphi_1^{\xi}\br\bfvb,\,
\varphi_2^{\xi}\br\bfvb\bigr)_{2;\, \Mb{1}{1+\xi}}+
\|\varphi_2^{\xi}\br\bfv'\|_{2;\, \Mb{1}{4}}^2\br,
\label{5.2*}
\end{equation}
where \phantom{$\cn02$}
\begin{displaymath}
2\int_{\tb_{\delta}}^{\infty}\bigl[\br\bigl(\varphi_1^{\xi}\br\bfvb,\,
\varphi_2^{\xi}\br\bfvb\bigr)_{2;\, \Mb{1}{1+\xi}}+
\|\varphi_2^{\xi}\br\bfv'\|_{2;\, \Mb{1}{4}}^2\bigr]\;
\rme^{-\frac{2}{3}a(\tau-\tb_{\delta})}\; \rmd\tau\ =:\
\cc02(\delta,\xi)\ \longrightarrow\ 0
\end{displaymath}
for $\delta\to 0+$ for each fixed $\xi$ due to Lemma \ref{L4}.
Substituting inequality (\ref{0.3}), with the integrand on the
right hand side expressed as in (\ref{5.2*}) to (\ref{2.46a}), we
obtain \phantom{$\cn03$}
\begin{align}
\| & \varphi\br\bfvb(\, .\, ,\tb)\|_{2;\, \Bb{4}}^2\,
\rme^{-\frac{2}{3}a(\tb-\tb_{\delta})}+\mu\int_{\tb_{\delta}}^{\infty}
\|\varphi_1^{\xi}\br\bfvb(\, .\, ,\tau)\|_{2;\, \Bb{1+\xi}}^2\;
\rme^{-\frac{2}{3}a(\tau-\tb_{\delta})}\; \rmd\tau \nonumber \\
& \hspace{17pt} +
2\nu\int_{\tb_{\delta}}^{\tb}\|\nablab(\varphi\br\bfvb(\, .\,
,\tau))\|_{2;\, \Bb{4}}^2\,
\rme^{-\frac{2}{3}a(\tau-\tb_{\delta})}\; \rmd\tau \nonumber \\
\noalign{\vskip 0pt}
& \leq\ \cc03(\delta,\zeta,\mu)\,
\biggl[\int_{\tb_{\delta}}^{\infty} \|\varphi_1^{\xi}\br\bfvb(\,
.\, ,\tau)\|_{2;\, \Bb{1+\xi}}^2\;
\rme^{-\frac{2}{3}a(\tau-\tb_{\delta})}\;
\rmd\tau+\cc02(\delta,\xi)\biggr]+\cc57(\delta),
\label{5.4*}
\end{align}
where $\mu$ is an arbitrary positive number and
\begin{equation}
\cc03(\delta,\zeta,\mu)\ :=\ \frac{2a(1+\zeta)}{3}\,
\cc56(\delta)-\frac{a}{6}+\mu\ \longrightarrow\
\frac{a}{2}+\mu+\frac{2a\zeta}{3} \qquad \mbox{as}\ \delta\to 0+
\label{5.6*}
\end{equation}
(due to (\ref{2.47})). Inequality (\ref{5.4*}) is satisfied for
$\tb_{\delta}\in\cE'_{\zeta}$, for corresponding $\delta=\rho\,
\rme^{-\frac{1}{2}a\tb_{\delta}}$ (we denote by $\cE_{\zeta}$ the
set of these $\delta$) and for all $\tb>\tb_{\delta}$. The
integrand $\|\varphi_1^{\xi}\br\bfvb(\, .\, ,\tau)\|_{2;\,
\Bb{1+\xi}}^2$ in the integral on the right hand side of
(\ref{5.4*}) can be further estimated by means of inequality
(\ref{5.1*}). The term $\|\nablab(\varphi_1^{\xi}\bfvb)\|_{2;\,
\Bb{1+\xi}}^2$ on the right hand side of (\ref{5.1*}) is estimated
as follows:
\begin{align*}
\|\nablab(\varphi_1^{\xi} & \bfvb)\|_{2;\, \Bb{1+\xi}}^2\ =\
\|\varphi_1^{\xi}\br\nablab\bfvb\|_{2;\, \Bb{1+\xi}}^2+2\, \bigl(
\varphi_1^{\xi}\br\nablab\bfvb,\,
\nablab\varphi_1^{\xi}\otimes\bfvb\bigr)_{2;\, \Bb{1+\xi}}+
\|\nablab\varphi_1^{\xi}\otimes\bfvb\|_{2;\, \Bb{1+\xi}}^2
\\ \noalign{\vskip 2pt}
& \leq\ \|\nablab\bfvb\|_{2;\, \Bb{1+\xi}}^2+\int_{\Mb{1}{1+\xi}}
\nablab(\varphi_1^{\xi})^2\cdot\nablab\bfvb\cdot\bfvb\; \rmd\bfxb+
\|\nablab\varphi_1^{\xi}\otimes\bfvb\|_{2;\, \Mb{1}{1+\xi}}^2
\\ \noalign{\vskip 2pt}
& \leq\ \|\nablab(\varphi\br\bfvb)\|_{2;\,
\Bb{4}}^2-\frac{1}{2}\int_{\Mb{1}{1+\xi}}\Deltab(\varphi_1^{\xi})^2\,
|\bfvb|^2\; \rmd\bfxb+
\|\nablab\varphi_1^{\xi}\otimes\bfvb\|_{2;\, \Mb{1}{1+\xi}}^2\br.
\end{align*}
Now, inequalities (\ref{5.1*}) and (\ref{5.4*}) yield
\phantom{$\cn05$}
\begin{align}
\| & \varphi\br\bfvb(\, .\, ,\tb)\|_{2;\, \Bb{4}}^2\,
\rme^{-\frac{2}{3}a(\tb-\tb_{\delta})}+\mu\int_{\tb_{\delta}}^{\infty}
\|\varphi_1^{\xi}\br\bfvb(\, .\, ,\tau)\|_{2;\, \Bb{1+\xi}}^2\;
\rme^{-\frac{2}{3}a(\tau-\tb_{\delta})}\; \rmd\tau \\
& \hspace{17pt} +
2\nu\int_{\tb_{\delta}}^{\tb}\|\nablab(\varphi\br\bfvb(\, .\,
,\tau))\|_{2;\, \Bb{4}}^2\,
\rme^{-\frac{2}{3}a(\tau-\tb_{\delta})}\; \rmd\tau \nonumber \\
\noalign{\vskip 0pt}
& \leq\ \cc03(\delta,\zeta,\mu)\, \frac{(1+\kappa)\,
(1+\xi)^2}{\lambda_S(B_1)} \int_{\tb_{\delta}}^{\infty}
\|\nablab(\varphi\br\bfvb(\, .\, ,\tau))\|_{2;\, \Bb{4}}^2\;
\rme^{-\frac{2}{3}a(\tau-\tb_{\delta})}\; \rmd\tau \nonumber \\
\noalign{\vskip 4pt}
& \hspace{17pt} +\br\cc03(\delta,\zeta,\mu)\ \frac{(1+\kappa)\,
(1+\xi)^2}{\lambda_S(B_1)}\
\cc05(\delta,\kappa,\xi)+\cc03(\delta,\zeta,\mu)\
\cc02(\delta,\xi)+\cc57(\delta), \label{5.5*}
\end{align}
where
\begin{align*}
\cc05(\delta,\kappa,\xi)\ &:=\
\int_{\tb_{\delta}}^{\infty}\Bigl[\br
\frac{1}{2}\int_{\Mb{1}{1+\xi}}|\Deltab(\varphi_1^{\xi})^2|\,
|\bfvb|^2\; \rmd\bfxb+ \|\nablab\varphi_1^{\xi}
\otimes\bfvb\|_{2;\, \Mb{1}{1+\xi}}^2\Bigr]\;
\rme^{-\frac{2}{3}a(\tb-\tb_{\delta})}\; \rmd\br\tb \\
\noalign{\vskip 2pt}
& \hspace{20pt} +\cc67(\kappa,\xi)\int_{\tb_{\delta}}^{\infty}
\|\bfvb\|_{2;\, \Mb{1}{1+\xi}}^2\;
\rme^{-\frac{2}{3}a(\tb-\tb_{\delta})}\; \rmd\br\tb.
\end{align*}
Due to (\ref{2.11*}), $\cc05(\delta,\kappa,\xi)\to 0$ for all
fixed $\kappa$, $\xi$ and $\delta\to 0+$. We observe from
(\ref{5.6*}) and from the inequality $a<4\nu\lambda_S(B_1)$ (see
the assumptions of Theorem \ref{T1}) that there exist positive
numbers $\delta_0$, $\xi$, $\zeta$, $\mu$, $\kappa$ and $\epsilon$
such that
\vspace{-8pt}
\begin{equation}
2\nu-\cc03(\delta,\zeta,\mu)\, \frac{(1+\kappa)\,
(1+\xi)^2}{\lambda_S(B_1)}\ \geq\ \epsilon
\label{5.8*}
\end{equation}
for all $0<\delta\leq\delta_0$. Then inequality (\ref{5.5*}), with
these fixed numbers $\xi$, $\zeta$, $\mu$ $\kappa$ and $\epsilon$,
and with $\delta\in\cE_{\zeta}$, $0<\delta\leq\delta_0$, yields
\begin{align}
\|\varphi\br\bfvb(\, .\, & ,\tb)\|_{2;\, \Bb{4}}^2\
\rme^{-\frac{2}{3}a(\tb-\tb_{\delta})}+\mu\int_{\tb_{\delta}}^{\infty}
\|\varphi_1^{\xi}\br\bfvb(\, .\, ,\tau)\|_{2;\, \Bb{1+\xi}}^2\;
\rme^{-\frac{2}{3}a(\tau-\tb_{\delta})}\; \rmd\tau \\
& \hspace{17pt} +
\epsilon\int_{\tb_{\delta}}^{\tb}\|\nablab(\varphi\br\bfvb(\, .\,
,\tau))\|_{2;\, \Bb{4}}^2\,
\rme^{-\frac{2}{3}a(\tau-\tb_{\delta})}\; \rmd\tau \nonumber \\
\noalign{\vskip 2pt}
& \leq\ \cc03(\delta,\zeta,\mu)\ \frac{(1+\kappa)\,
(1+\xi)^2}{\lambda_S(B_1)}\
\cc05(\delta,\kappa,\xi)+\cc03(\delta,\zeta,\mu)\
\cc02(\delta,\xi)+ \cc57(\delta). \label{5.7*}
\end{align}
Thus, we deduce that an analogous expression to the left hand side
of (\ref{4.2}) tends to zero as $\delta\to 0$,
$\delta\in\cE_{\zeta}$. The proof of Theorem \ref{T1} can now be
completed in the same way as in Section \ref{S4} after
(\ref{4.2}).

\section{A generalization of Theorem \ref{T1}} \label{S6}

The assumption $a<4\nu\lambda_S(B_1)$ in Theorem \ref{T1}
represents a restriction on the shape of paraboloid $P_a$: the
paraboloid cannot be arbitrarily wide and set $\Uar$ (where $\bfv$
is supposed to satisfy the Serrin--type condition, considered for
fixed $\rho$) therefore cannot be arbitrarily small. The condition
$a<4\nu\lambda_S(B_1)$ is used only in Section \ref{S5}, where it
guarantees the validity of inequality (\ref{5.8*}). There arises a
natural question whether Theorem \ref{T1} can be improved so that
the Serrin--type integrability condition would be assumed on a
smaller set than $\Uar$. We present such a possibility in this
section.

In order to stress the dependence on parameter $a$, we further
denote the function $\theta(t)\equiv\sqrt{a(t_0-t)}$ by
$\theta_a(t)$. Then $\theta_1(t)=\sqrt{t_0-t}$. Note that set
$\Uar$ (see Section \ref{S1}) can also be defined as follows:
\begin{displaymath}
\Uar\ =\ \bigl\{\, (\bfx,t)\in\R^4;\ t_0-\rho^2<t<t_0,\ \bfx\in
B_{\sqrt{a}\br\rho}(\bfx_0)\smallsetminus\overline{[\bfx_0+
\theta_1(t)\, B_{\sqrt{a}}(\bfzero)]}\, \bigr\}. \label{6.1}
\end{displaymath}
Furthermore, since the least eigenvalue of the Dirichlet--Stokes
operator in $B_{\sqrt{a}}$ (any ball in $\R^3$ with the radius
$\sqrt{a}$) is $\lambda_S(B_{\sqrt{a}})=\lambda_S(B_1)/a$, the
condition $a<4\nu\lambda_S(B_1)$ is equivalent to
$1<4\nu\lambda_S(B_{\sqrt{a}})$.

These notes lead us to the generalization of Theorem \ref{T1}:
assume that $D$ is a bounded open set in $\R^3$ (not necessarily
connected), with a Lipschitzian boundary and containing point
$\bfzero$. Let $\lambda_S(D)$ be the least eigenvalue of the
Dirichlet--Stokes operator in $D$. We define set $U_{\rho}$
(analogous to the previous $\Uar$) to be
\begin{displaymath}
U_{\rho}\ :=\ \bigl\{\, (\bfx,t)\in\R^4;\ t_0-\rho<t<t_0,\ \bfx\in
B_{\rho}(\bfx_0)\smallsetminus\overline{[\bfx_0+\theta_1(t)\,
D]}\, \bigr\}.
\end{displaymath}
Now we can formulate the theorem:

\begin{theorem}
\label{T2}
Let $\bfv$ be a suitable weak solution of system (\ref{1.1}),
(\ref{1.2}), $(\bfx_0,t_0)\in Q_T$ and $\rho>0$ be so small that
$Q_{1,\rho}\subset Q_T$. Assume that $D$ is a bounded open set in
$\R^3$ with a Lipschitzian boundary, containing point $\bfzero$,
such that $1<4\nu\lambda_S(D)$ and function $\bfv$ satisfies the
integrability condition in set $U_{\rho}$\br:
\begin{equation}
\int_{t_0-\rho^2}^{t_0}\biggl(\int_{
B_{\rho}(\bfx_0)\smallsetminus[\bfx_0+\theta_1(t)\, D]}
|\bfv(\bfx,t)|^s\; \rmd\bfx\biggr)^{\! \frac{r}{s}}\, \rmd\br t\
<\ \infty \label{6.1*}
\end{equation}
for some $r$, $s$, satisfying inequalities (\ref{1.7*}). Then
$(\bfx_0,t_0)$ is a regular point of solution $\bfv$.
\end{theorem}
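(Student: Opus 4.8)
The plan is to rerun the proof of Theorem~\ref{T1} with the ball $B_{\sqrt a}(\bfzero)$ (which generated the paraboloid $P_a$) replaced by the general set $D$, and with the dilation parameter normalised to $a=1$; the hypothesis $1<4\nu\lambda_S(D)$ then plays precisely the role that $a<4\nu\lambda_S(B_1)$ played before. Concretely, I would set $\theta_1(t):=\sqrt{t_0-t}$, introduce the coordinates $\bfxb:=(\bfx-\bfx_0)/\theta_1(t)$ and $\tb:=\ln\bigl(\rho^2/(t_0-t)\bigr)$, and rescale $\bfvb,\pb$ as in (\ref{2.7*})--(\ref{2.8*}) with $a=1$. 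This map carries $\{(\bfx,t):t_0-\rho^2<t<t_0,\ \bfx\in\bfx_0+\theta_1(t)D\}$ onto the cylinder $D\times(0,\infty)$, carries $U_\rho$ onto $\{(\bfxb,\tb):\tb>0,\ \bfxb\in B_{\rme^{\tb/2}}(\bfzero)\smallsetminus\overline D\}$, and makes $(\bfvb,\pb)$ a suitable weak solution of (\ref{2.9*})--(\ref{2.10*}) with $a=1$ on every bounded subdomain of $\{(\bfxb,\tb):\tb>0,\ |\bfxb|<\rme^{\tb/2}\}$. Splitting $G(\delta):=\delta^{-2}\int_{t_0-\delta^2}^{t_0}\int_{B_\delta(\bfx_0)}|\bfv|^3\,\rmd\bfx\,\rmd t$ into the piece $G^I(\delta)$ over $B_\delta(\bfx_0)\smallsetminus[\bfx_0+\theta_1(t)D]$ and the piece $G^{II}(\delta)$ over $B_\delta(\bfx_0)\cap[\bfx_0+\theta_1(t)D]$, the first piece tends to $0$ by H\"older's inequality and (\ref{6.1*}) exactly as in Section~\ref{S2}: for $\delta<\rho$ the domain of $G^I(\delta)$ is contained in the set appearing in (\ref{6.1*}), and it avoids a full neighbourhood of $\bfx_0$ because $\bfzero\in D$. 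It then suffices to prove $G^{II}(\delta)\to 0$ along some sequence $\delta\to 0+$, since by Wolf's criterion (\ref{1.4*}) this makes $(\bfx_0,t_0)$ regular.

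For $G^{II}(\delta)$ I would transcribe Sections~\ref{S2}--\ref{S5} with only cosmetic changes. Fix a nested family of bounded Lipschitz enlargements $\overline D\subset D_1\subset D_2\subset D_3$ (fixed small neighbourhoods of $D$, which are Lipschitz since $D$ is) and take $\varphi\in C_0^\infty(\R^3)$ with $\varphi\equiv 1$ on $D_2$ and $\supp\varphi\subset D_3$, in place of (\ref{2.12*}). The first estimate of $G^{II}$, the generalized energy inequality (\ref{2.14*}) in the $\bfxb,\tb$--variables, and the identities (\ref{2.16*})--(\ref{2.17*}) then go through verbatim (legitimate once $\delta$ is small enough that $D_3$ lies in the section $\{\tb>\tb_\delta\}$ of the transformed domain). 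The analogue of Lemma~\ref{L4} holds because the fixed annular region $D_3\smallsetminus\overline{D_1}$, which contains $\supp\nablab\varphi$ and, later, $\supp\nablab\varphi_1^\xi$, is squeezed between two paraboloids when pulled back to the $\bfx,t$--variables (here one uses that $D$ is bounded and contains a ball around $\bfzero$), hence lies inside the set of (\ref{6.1*}) for $\tb$ large. The pressure decomposition of Lemma~\ref{L2} is reproduced with $\Bb 1$ replaced by $D$ and $\Bb 2$ by $D_1$: the Calder\'on--Zygmund ($\pb_2$) term is taken over the Serrin region $B_{\rme^{\tb/2}}(\bfzero)\smallsetminus\overline D$ and the far--field ($\pb_3$) term over the annulus $B_{\rme^{\tb/2}}(\bfzero)\smallsetminus\overline{B_{\gamma\rme^{\tb/2}}(\bfzero)}$, on which $\bfv$ and $p$ are bounded after shrinking $\rho$ and choosing $\gamma$ by the partial regularity result of \cite{CKN}; the pointwise bounds (\ref{3.4*}), (\ref{3.6*}) survive because $\supp\nablab\varphi$ is at positive distance from $D_1\supset\supp\eta_1$. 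The dichotomy of Lemma~\ref{L1} and the branch carried out in Section~\ref{S4} require no change.

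The only step that is not purely formal is the Friedrichs estimate of Section~\ref{S5}. Writing $\varphi=\varphi_1^\xi+\varphi_2^\xi$ with $\varphi_1^\xi\equiv 1$ on the $(\xi/4)$--neighbourhood of $D$ and $\supp\varphi_1^\xi$ inside its $(3\xi/4)$--neighbourhood, and removing the divergence by a function $\bfw^\xi$ produced by the Bogovskij operator on the shell between those two neighbourhoods (a finite union of Lipschitz collars, one around each component of $\partial D$, on which the zero--flux compatibility condition holds because $\divb\bfvb=0$), the difference $\varphi_1^\xi\bfvb-\bfw^\xi$ is divergence--free and lies in $\bfW^{1,2}_0\bigl(D^{(\xi)}\bigr)$, $D^{(\xi)}$ being the $\xi$--neighbourhood of $D$; in place of (\ref{5.9*}) I would use
\begin{displaymath}
\bigl\|\varphi_1^\xi\bfvb-\bfw^\xi\bigr\|_{2;\,D^{(\xi)}}\ \le\ \frac{1}{\sqrt{\lambda_S\bigl(D^{(\xi)}\bigr)}}\ \bigl\|\nablab(\varphi_1^\xi\bfvb-\bfw^\xi)\bigr\|_{2;\,D^{(\xi)}}\,.
\end{displaymath}
Since $\xi\mapsto\lambda_S\bigl(D^{(\xi)}\bigr)$ is nondecreasing as $\xi\downarrow 0$ and $\lambda_S\bigl(D^{(\xi)}\bigr)\to\lambda_S(D)$ for Lipschitz $D$ (the standard continuity of the principal Stokes eigenvalue along a shrinking family of domains, via the variational characterization and approximation in $\bfW^{1,2}_{0,\sigma}$), the constant $(1+\kappa)/\lambda_S\bigl(D^{(\xi)}\bigr)$ can be made as close to $1/\lambda_S(D)$ as we wish by taking $\xi$ and $\kappa$ small. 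With $a=1$ the quantity $\cc03(\delta,\zeta,\mu)$ tends to $\tfrac12+\mu+\tfrac23\zeta$ as $\delta\to 0+$ (the analogue of (\ref{5.6*})), so the hypothesis $1<4\nu\lambda_S(D)$ yields, for suitable small $\xi,\zeta,\mu,\kappa$ and some $\epsilon,\delta_0>0$, the analogue of (\ref{5.8*}):
\begin{displaymath}
2\nu-\cc03(\delta,\zeta,\mu)\,\frac{1+\kappa}{\lambda_S\bigl(D^{(\xi)}\bigr)}\ \ge\ \epsilon\qquad\text{for }0<\delta\le\delta_0\,.
\end{displaymath}
From here Section~\ref{S5} runs to its conclusion unchanged, giving $G^{II}(\delta)\to 0$ along the relevant sequence and hence $\liminf_{\delta\to 0+}G(\delta)=0$. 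The main obstacle I anticipate is exactly this continuity $\lambda_S\bigl(D^{(\xi)}\bigr)\to\lambda_S(D)$, which is what guarantees that no room is lost in the last displayed inequality; a secondary, more mechanical difficulty is checking that the Bogovskij operator and the potential/Calder\'on--Zygmund estimates in the Lemma~\ref{L2} analogue behave properly on the collars around a possibly disconnected, merely Lipschitz set $D$.
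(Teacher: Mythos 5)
Your proposal is correct and follows essentially the same route as the paper, which itself only sketches the proof of Theorem~\ref{T2} as a transcription of the proof of Theorem~\ref{T1} with $\theta_1$ in place of $\theta_a$, the sets $V'$, $U'$ in place of $\Vab$, $\Uab$, a cut--off vanishing outside a fixed neighbourhood of $D$, and the $\xi$--neighbourhood of $D$ in place of $\Bb{1+\xi}$ in the Friedrichs--type estimate. In fact you are more explicit than the paper on the one genuinely non--formal point, namely that $\lambda_S$ of the $\xi$--neighbourhood of $D$ tends to $\lambda_S(D)$ as $\xi\to 0+$ (which for the ball is immediate by scaling but for general Lipschitz $D$ needs the stability of the Dirichlet--Stokes eigenvalue under outer approximation), and on the zero--flux compatibility needed for the Bogovskij operator on the collar.
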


\noindent
Theorem \ref{T2} can be proven in the same way as Theorem
\ref{T1}, up to smaller modifications. The most important ones
are: we use function $\theta_1$ instead of $\theta_a$, we obtain
the sets
\begin{align*}
V'\ &=\ \bigl\{(\bfxb,\tb)\in\R^4;\ \tb>0\ \mbox{and}\ \bfxb\in
D\bigr\}, \\ \noalign{\vskip 4pt}
U'\ &=\ \bigl\{(\bfxb,\tb)\in\R^4;\ \tb>0,\
|\bfxb|<\rme^{\frac{1}{2}\tb},\ \bfxb\not\in D\bigr\}
\end{align*}
instead of $\Var$ and $\Uar$, we deal with $U_{\xi}(D)$ (the
$\xi$--neighbourhood of $D$) instead of $\Bb{1+\xi}$ and the
cut--off function $\varphi$ decreases from one to zero in
$\Mb{R+1}{R+2}$ instead of $\Mb{3}{4}$ (where $R$ is so large that
$D\subset B_R(\bfzero)$).

Theorem \ref{T1} is a special case of Theorem \ref{T2},
corresponding to the choice $D=B_{\sqrt{a}}(\bfzero)$. However,
$D$ can generally have another shape than $B_{\sqrt{a}}(\bfzero)$
(it can be e.g.~``larger'' than $B_{\sqrt{a}}(\bfzero)$ in some
directions) and it can still satisfy the condition
$1<4\nu\lambda_S(D)$. The question of dependence of $\lambda_S(D)$
on $D$ is discussed in greater detail in \cite{SeVe} and
\cite{Yu}.

\vspace{5pt} \noindent
{\bf Acknowledgments.} \ The research was supported by the Grant
Agency of the Czech Republic (grant No.~13-00522S) and by the
Academy of Sciences of the Czech Republic (RVO 67985840).


\noindent
\begin{tabular}{ll}
{\it Author's address:} \ & Institute of Mathematics, Academy of
Sciences of the Czech Republic \\ & \v{Z}itn\'a 25, 115 67 Praha 1
\\ & Czech Republic \\ & {\it e-mail:} \ neustupa@math.cas.cz
\end{tabular}


\begin{thebibliography}{99}
\itemsep=-3pt



\bibitem{CKN}
L.~Caffarelli, R.~Kohn and L.~Nirenberg: Partial regularity of
suitable weak solutions of the Navier--Stokes equations.
\textit{Comm.~on Pure and Appl.~Math.} \textbf{35}, 1982,
771--831.




\bibitem{FaKoSo3}
R.~Farwig, H.~Kozono and H.~Sohr: Criteria of local in time
regularity of the Navier--Stokes equations beyond Serrin's
condition. \textit{Banach Center Publ.} \textbf{81},
\textit{Parabolic and Navier--Stokes equations, Part 1}, Warsaw
2008, 175–-184.



\bibitem{Ga2}
G.~P.~Galdi: An Introduction to the Navier--Stokes
initial--boundary value problem. In \textit{Fundamental Directions
in Mathematical Fluid Mechanics,} ed.~G.~P.~Galdi, J.~Heywood,
R.~Rannacher, series ``Advances in Mathematical Fluid Mechanics''.
Birkhauser, Basel 2000, 1--98.






\bibitem{KS}
P.~Ku\v{c}era and Z.~Skal\'ak: A note on the generalized energy
inequality in the Navier--Stokes equations. \textit{Appl.~Math.}
\textbf{48}, 2003, 537–-545.

\bibitem{La}
O.~A.~Ladyzhenskaya: \textit{Mathematical Problems in the Dynamics
of Viscous Incompressible Fluid.} Gordon and Breach, New York
1963.

\bibitem{LaSe}
O.~A.~Ladyzhenskaya and G.~A.~Seregin: On partial regularity of
suitable weak solutions to the three--dimensional Navier--Stokes
equations. \textit{J.~Math.~Fluid Mech.} \textbf{1}, 1999,
356--387.


\bibitem{Lin}
F.~Lin: A new proof of the Caffarelli--Kohn--Nirenberg theorem.
\textit{Comm.~on Pure and Appl. Math.} \textbf{51}, 1998,
241--257.



\bibitem{NeNe}
J.~Ne\v{c}as and J.~Neustupa: New conditions for local regularity
of a suitable weak solution to the Navier--Stokes equations.
\textit{J.~Math.~Fluid~Mech.} \textbf{4}, 2002, 237--256.



\bibitem{Ne2}
J.~Neustupa: A note on local interior regularity of a suitable
weak solution to the Navier–-Stokes problem. To appear in
\textit{Discr.~Cont.~Dyn.~Systems, Ser.~S.}

\bibitem{Ne3}
J.~Neustupa: A removable singularity in a suitable weak solution
to the Navier--Stokes equations. \textit{Nonlinearity}
\textbf{25}, 2012, 1695--1708.

\bibitem{Sa}
R.~S.~Saks: Spectral problems for the Curl and Stokes operators.
\textit{Doklady Mathematics} \textbf{76}, 2, 2007, 724--728.

\bibitem{SeVe}
I.~\v{S}ebestov\'a and T.~Vejchodsk\'y: Two--sided bounds for
eigenvalues of differential operators with applications to
friedrichs, Poincar\'e, trace and similar constants.
\textit{Ne\v{c}as Center for Mathematical Modelling,} Preprint
No.~2013-05, Prague 2013.

\bibitem{SeSv}
G.~Seregin and V.~\v{S}ver\'ak: On smoothness of suitable weak
solutions to the Navier--Stokes equations. \textit{J.~of
Math.~Sci.} \textbf{130}, 4, 2005, 4884--4892.

\bibitem{Ser}
G.~A.~Seregin: Local regularity for suitable weak solutions of the
Navier–-Stokes equations. \textit{Russian Math.~Surveys}
\textbf{62}, 3, 2007, 595–-614.



\bibitem{So}
H.~Sohr: \textit{The Navier--Stokes Equations. An Elementary
Functional Analytic Approach.} Birkh\"auser Advanced Texts,
Basel--Boston--Berlin 2001.

\bibitem{Tak}
S.~Takahashi: On interior regularity criteria for weak solutions
of the Navier--Stokes equations. \textit{Manuscripta Math.}
\textbf{69}, 3, 1990, 237–-254.

\bibitem{Ta}
G.~Talenti: Best constant in Sobolev inequality.
\textit{Ann.~Mat.~Pura Appl.} \textbf{110}, 1976, 353--372.


\bibitem{Te}
R.~Temam: \textit{Navier--Stokes Equations.} North--Holland,
Amsterdam--New York--Ox\-ford 1977.

\bibitem{Va}
A.~Vasseur: A new proof of partial regularity of solutions to
Navier--Stokes equations. \textit{Nonlin.~Diff.~Eq.~and Appl.}
\textbf{14}, 2007, 753--785.



\bibitem{Wo2}
J.~Wolf: A new criterion for partial regularity of suitable weak
solutions to the Navier--Stokes equations. \textit{Advances in
Mathematical Fluid Mechanics,} ed.~R.~Rannacher, A.~Sequeira,
Springer, Berlin, 2010, 613–-630.


\bibitem{Yu}
V.~V.~Yurinsky: A lower bound for the principal eigenvalue of the
Stokes operator in a random domain. Annales de l'Institut Henri
Poincar\'e -- Probabilit\'es et Statistiques \textbf{44}, 1, 2008,
1–-18.
\end{thebibliography}
\end{document}